\documentclass[12pt,letterpaper]{article}

\usepackage{amssymb,amsfonts,amscd,amsthm}
\usepackage{thmtools}
\usepackage[all,arc]{xy}
\usepackage{enumerate, bbm}
\usepackage{mathrsfs}
\usepackage{tikz}
\usepackage[left=0.8in,top=0.8in,right=0.8in,bottom=0.8in]{geometry}
\usepackage{mathtools}
\usepackage{hyperref}
\usepackage{cleveref}
\usepackage{color}
\usepackage{graphicx}
\usepackage{enumitem} 
\graphicspath{ {TexImages/} }

\newtheorem{thm}{Theorem}[section]
\newtheorem{cor}[thm]{Corollary}
\newtheorem{prop}[thm]{Proposition}
\newtheorem{lem}[thm]{Lemma}
\newtheorem{claim}[thm]{Claim}

\newtheorem{quest}[thm]{Question}
\newtheorem{prob}[thm]{Problem}

\theoremstyle{definition}
\newtheorem{defn}{Definition}

\hypersetup{
	colorlinks,
	citecolor=blue,
	filecolor=blue,
	linkcolor=blue,
	urlcolor=blue,
	linktocpage
}

\setlist[enumerate]{itemsep=2ex, topsep=2ex} 
\setlist[itemize]{itemsep=2ex, topsep=2ex}

\newcommand{\N}{\mathbb{N}}

\newcommand{\E}{\mathbb{E}}
\newcommand{\al}{\alpha}
\newcommand{\be}{\beta}

\newcommand{\ep}{\varepsilon}

\newcommand{\Om}{\Omega}

\newcommand{\del}{\delta}
\newcommand{\Del}{\Delta}

\newcommand{\half}{\frac{1}{2}}

\newcommand{\sm}{\setminus}
\newcommand{\sub}{\subseteq}

\renewcommand{\c}[1]{\mathcal{#1}}

\newcommand{\mr}[1]{\mathrm{#1}}

\renewcommand{\SS}[1]{\textcolor{red}{#1}}

\newcommand{\ex}{\mr{ex}}

\newcommand{\Var}{\mr{Var}}

\newcommand{\num}{\#}
\newcommand{\planar}{P}

\title{Supersaturation for Hypergraph-Weighted Independent Sets}
\author{Sam Spiro\footnote{Dept.\ of Mathematics and Statistics, Georgia State University, sspiro@gsu.edu}.}
\date{\today}

\begin{document}
	\maketitle
\begin{abstract}
	Many extremal problems can be viewed as finding large independent sets in an auxiliary hypergraph.  We propose a generalization of this by looking for ``large'' independent sets $I$ in a hypergraph $\mathcal{F}$ where ``large'' is measured by how many edges $I$ induces in another hypergraph $\mathcal{H}$ on the same vertex set as $\mathcal{F}$.  We prove general supersaturation results for such extremal problems motivated by the breakthrough work of Ferber, McKinley and Samotij on counting $F$-free graphs.  As applications, we prove new supersaturation bounds for generalized Tur\'an problems, as well as supersaturation bounds for a new set of extremal problems inspired by work of Fox and Pohoata on finding subsets $A\sub\mathbb{N}$ which maximize the number of solutions to a given system of equations while avoiding solutions to another system.
\end{abstract}

\section{Introduction}
Classical problems in extremal combinatorics ask how large an object can be before it contains a given structure $X$.  Supersaturation problems refine this by asking how many copies of $X$ are guaranteed to exist in an object of a given size.  

Supersaturation results are interesting in their own right as well as useful tools for other problems.  For example, the best known upper bounds for Tur\'an numbers of complete bipartite graphs \cite{kovari1954problem} and the hypercube \cite{erdos1984cube} use supersaturation results. Supersaturation results have become increasingly relevant due to them being key ingredients in applications of the powerful method of hypergraph containers due independently to Balogh, Morris, and Samotij~\cite{balogh2015independent} and Saxton and Thomason~\cite{saxton2015hypergraph}.  For example, the breakthrough work of Ferber, McKinley, and Samotij~\cite{ferber2020supersaturated} on counting $F$-free graphs uses at its core a supersaturation result for Tur\'an problems together with an application of the hypergraph container method.

Partially inspired by this work of Ferber, McKinley, and Samotij, we establish general supersaturation results together with an abstract framework for viewing extremal problems as finding large weighted independent sets in hypergraphs.  To give context for our abstract statements, we begin with some consequences of our theorems to the concrete setting of generalized Tur\'an problem supersaturation, as well as to a new set of extremal problem for systems of equations.

\subsection{Generalized Tur\'an Problems}
Given graphs $H$ and $F$, the \textit{generalized Tur\'an number} $\ex(n,H,F)$ is the maximum number of copies of $H$ in an $n$-vertex $F$-free graph, and when $H=K_2$ we write $\ex(n,F):=\ex(n,K_2,F)$ and call this the (classical) \textit{Tur\'an number}.  The definition and systematic study of $\ex(n,H,F)$ was introduced in foundational work by Alon and Shikhelman~\cite{AS2016}, and since then there has seen a tremendous amount of work on the topic \cite{GGMV2020, GP2022, MYZ2018, ZGHLSX2023}; see the recent survey of Gerbner and Palmer~\cite{GP2025} for a more complete look at this area.

The supersaturation problem for generalized Tur\'an problems is significantly harder than that of classical Tur\'an problems.  For example, it is straightforward to show that if $G$ is an $n$-vertex graph with $e(G)\ge k\cdot \ex(n,F)$ then $G$ contains at least $(k-1) \ex(n,F)$ copies of $F$ simply by iteratively deleting edges from $G$ which lie in a copy of $F$. Such a basic result does not hold, however, for arbitrary generalized Tur\'an problems, as can be seen by considering any example with $\ex(n,H,F)\gg n^{v(F)}$ since no graph can have more than $n^{v(F)}$ copies of $F$.  Because of this and other difficulties there has been little work on supersaturation for generalized Tur\'an problems, though see for example \cite{cutler2022supersaturation,dubroff2025clique,gerbner2022unified,halfpap2021supersaturation} for some sporadic results.

Overcoming the obstacles mentioned above, we give the first ever set of lower bounds for generalized Tur\'an problem supersaturation which hold for essentially arbitrary $H$ and $F$.  Here we write $\num(F,G)$ to denote the number of subgraphs of $G$ which are isomorphic to $F$.
\begin{thm}\label{generalized Turan supersaturation}
	Let $H$ be an $h$-vertex graph, $F$ an $f$-vertex graph without isolated vertices, and $\be<h$ such that $\ex(n,H,\c{F})= O(n^\be)$.  There exists some $k_0$ such that the following holds for any $n$-vertex graph $G$ with $\num(H,G)\ge k n^\be$ and $k\ge k_0$:
	\begin{itemize}
		\item[(i)] We have 
		\[\num(F,G)=\Omega\left( k^{\frac{f+h-2-\be}{h-\be}}n^{\be-h+2}\right).\]
		\item[(ii)] If $f\ge h-\be$, then
		\[\num(F,G)=\Omega\left(k^{\frac{f}{h-\be}}\right).\]
		\item[(iii)] If $f<h-\be$ and if $F$ is not isomorphic to a subgraph of $H$, then
		\[\num(F,G)=\Omega\left(k^{\frac{2}{h-\be-f+2}}\right).\]
	\end{itemize}
\end{thm}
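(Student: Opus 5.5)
The plan is a random-sampling-plus-deletion argument; let $C$ be the constant with $\ex(m,H,F)\le Cm^\be$ for all $m$. Let $V_p\sub V(G)$ contain each vertex independently with probability $p$, and write $G_p=G[V_p]$. Then
\[\E[\num(H,G_p)]=p^h\num(H,G)\ge p^hkn^\be,\qquad \E[\num(F,G_p)]=p^f\num(F,G).\]
From $G_p$ I would delete an edge of every surviving copy of $F$ (this uses that $F$ has no isolated vertices) to get an $F$-free graph. This graph has at most $|V_p|$ vertices, so it has at most $C|V_p|^\be$ copies of $H$. Let $D$ be the number of pairs $(F',H')$ in $G_p$ of a copy of $F$ and a copy of $H$ that share at least two vertices. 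Then
\[\num(H,G_p)-D\le C|V_p|^\be .\]
Taking expectations, with $\E|V_p|^\be\le (pn)^\be$ by Jensen, gives
\[p^hkn^\be-\E D\le C(pn)^\be .\]

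For (i), I bound $\E D$ by splitting on the overlap size $j=|V(F')\cap V(H')|\ge 2$. Given $F'$, there are at most $n^{h-j}$ copies $H'$ with overlap $j$, and each pair survives with probability $p^{f+h-j}$. Hence
\[\E D\le \sum_{j\ge2}\num(F,G)\,p^f(pn)^{h-j}\le h^2\,p^f\num(F,G)(pn)^{h-2},\]
as long as $pn\ge1$. Now set $p=(2C/k)^{1/(h-\be)}$, so that $p^hkn^\be=2C(pn)^\be$. Taking $k\ge k_0$ makes $p\le 1$, and $pn\ge1$ holds because $kn^\be\le\num(H,G)\le n^h$. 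The displayed inequality then forces
\[p^f\num(F,G)(pn)^{h-2}\gtrsim p^hkn^\be,\]
which rearranges to $\num(F,G)\gtrsim kn^{\be-h+2}p^{2-f}=\Om\big(k^{\frac{f+h-2-\be}{h-\be}}n^{\be-h+2}\big)$.

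For (ii) I would keep the same $p$, or more generally $p=(\lambda/k)^{1/(h-\be)}$ with $\lambda$ a large constant, and try to make $G_p$ itself $F$-free rather than deleting. Suppose $\num(F,G)\le c\,k^{f/(h-\be)}$. Then $\E[\num(F,G_p)]=p^f\num(F,G)\le c\lambda^{f/(h-\be)}$. I would use
\[\E\big[\num(H,G_p)\mathbbm 1_{\{G_p\text{ is }F\text{-free}\}}\big]\ge \E[\num(H,G_p)]-\E[\num(H,G_p)\num(F,G_p)]\]
and compare the left side with $C(pn)^\be$. In the pair sum, disjoint pairs contribute about $\E\num(H,G_p)\cdot\E\num(F,G_p)$. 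This is a small fraction once $c$ is small relative to $\lambda$. Overlapping pairs with $j$ common vertices contribute at most $p^f\num(F,G)(pn)^{h-j}$.

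The hypothesis $f\ge h-\be$ is what I expect to control these overlap terms. Choosing $p$ with $pn$ of order $k^{\cdot}$-scale, via the inequality $\num(F,G)\ge\ldots$ already obtained in (i), should show that the overlap regime never dominates. This balancing of the overlap terms in (ii) is the step I expect to be the main obstacle: naive bounds of $(pn)^{h-1}$ per copy of $F$ are too weak, and some ingredient beyond crude counting is needed. What I would try first is a Harris/Janson-type lower bound on $\Pr[G_p\text{ is }F\text{-free}]$, combined with a second-moment bound for $\num(H,G_p)$ (the pair estimate above bounds $\E D$ from above, not $\Pr[D>0]$). Failing that, I would use the abstract hypergraph-weighted supersaturation theorem the paper develops later.

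For (iii), with $f<h-\be$ and $F\not\sub H$, every copy of $H$ in the final graph must avoid containing $F$. So an overlapping pair $(F',H')$ must have $|V(F')\cap V(H')|\le f$ with $F'\not\sub H'$, that is, $F'$ uses at least one vertex outside $H'$. I would redo the deletion argument, deleting a vertex of $F'$ outside the relevant $H'$'s where possible, and optimize $p$ afresh. The new balance equation reads
\[p^hkn^\be\approx p^2\num(F,G)\,(pn)^{h-\be-f+2}\cdot(\text{const}),\]
which should produce the exponent $\frac{2}{h-\be-f+2}$. Checking this bookkeeping is the second point where I expect care is needed.
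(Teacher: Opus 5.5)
Your argument for (i) is correct and is essentially the paper's. The paper applies \Cref{FMS bound} with $d=h-2$: random sampling, then deleting one edge from each copy of $F$, each deletion destroying $O(n^{h-2})$ copies of $H$. Your pair count $D$ is the same estimate. One repair is needed: for $\be>1$ the map $x\mapsto x^\be$ is convex, so Jensen gives $\E|V_p|^\be\ge (pn)^\be$, which is the wrong direction. Either sample a uniformly random set of exactly $pn$ vertices, as the paper does, or use that a binomial with mean $pn\ge 1$ has $\E|V_p|^\be=O_\be((pn)^\be)$ and enlarge the constant in your choice of $p$.

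Parts (ii) and (iii), however, are not proved. You locate the obstacle, but neither of your suggested tools removes it. The difficulty is that copies of $H$ and $F$ may concentrate on a few high-degree vertices. If most of them pass through one vertex $v$, then conditioning on $v\in V_p$ multiplies the expected number of copies of $F$ by $1/p$. The terms of $\E[\num(H,G_p)\num(F,G_p)]$ with overlap $1\le j<h-\be$ can then be of order $p^{-1}\E[\num(H,G_p)]\,\E[\num(F,G_p)]$. Your assumption $p^f\num(F,G)=O(1)$ does not make this small, and nothing in your argument rules the configuration out. For the same reason $\num(H,G_p)$ is not concentrated, so a second-moment bound fails. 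A lower bound on $\Pr[G_p\text{ is }F\text{-free}]$ does not help either: $\{V(H')\sub V_p\}$ is increasing and $F$-freeness is decreasing, so Harris bounds $\Pr[V(H')\sub V_p,\ G_p\text{ is }F\text{-free}]$ from above, not below.

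The paper proves (ii) by applying \Cref{supersaturation for f large} to $\c{P}=\{(\c{H}_G,\c{F}_G)\}$. The substance of that theorem is non-uniform sampling. After passing to an $h$-partite subhypergraph, each part is shrunk at its own rate (\Cref{shrinkTechnical}). If vertices with $q_v=\deg(v)/e(\c{H})\ge p$ carry half the edges, one such vertex is kept, chosen by averaging over types $\mathbf t$ so it lies in few edges of $\c{F}$ of each type. Otherwise the rest are sampled at rate $10p$, where Chebyshev works since all $q_v<p$. The target rate is recomputed whenever a part collapses to a single vertex (\Cref{shrinkingApplied}). The type bookkeeping of \Cref{supersaturationCase1} then forces $e(\c{F}')=0$ while $e(\c{H}')\ge\prod_i p_i\,e(\c{H})$, contradicting $\ex(m,H,F)=O(m^\be)$.

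For (iii) there is also a concrete error. From $F\not\sub H$ it does not follow that a copy $F'$ meeting a copy $H'$ uses a vertex outside $V(H')$, because $F'$ may use edges of $G$ that are not in $H'$. For example, take $H=4K_2$ and $F=P_3$, so $\be=4$ and $f=3<h-\be$. If $G$ contains the matching $a_1b_1,\dots,a_4b_4$ together with the edge $b_1a_2$, then $a_1b_1a_2$ is a copy of $P_3$ inside $V(H')$. For the same reason the pairs $(\c{H}_G,\c{F}_G)$ violate the no-containment hypothesis of \Cref{supersaturation for f large}, so even your fallback to an abstract theorem fails here.

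The paper first takes a random partition $V_1,\dots,V_h$ and passes to $G_{\c{V}}$, keeping only edges between $V_i$ and $V_j$ with $x_ix_j\in E(H)$. This loses only a factor $h^{-h}$. It then records only transversal copies of $H$ in $\c{H}_{G_{\c{V}},\c{V}}$. A copy of $F$ inside such an edge would map into $H$, so it cannot exist. This forces $|I_{\mathbf t}|\le|\mathbf t|-2$ for every relevant type, which is exactly where the exponent $\frac{2}{h-\be-f+2}$ comes from. Your heuristic balance equation cannot substitute for this: it has $n^\be$ on one side and $n^{h-\be-f+2}$ on the other, so it does not even yield an $n$-free bound.
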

The bound (i) when $H=K_2$ recovers a supersaturation result of Jiang and Longbrake~\cite{jiang2022balanced} which is the best known supersaturation bound for classical Tur\'an numbers for arbitrary $F$.  The bounds (i) and (ii) are both best possible when $k=\Theta(n^{h-\be})$, and which of these two bound does better for other values of $k$ depends on how $\be$ compares to $h-2$.  The bound (ii) is tight for all values of $k$ whenever\footnote{For example, if $H=K_3$ and $F=K_{2}\cup K_2$ then one can check $\ex(n,H,F)=1=O(n^0)$ for $n\ge 3$, and since $v(F)>v(H)$ and $F$ has no isolated vertices, \Cref{generalized Turan supersaturation}(ii) gives optimal supersaturation bounds in this case.} $\be=0$ by considering $G$ to be a clique on $k^{1/h}$ vertices.  Bounds (ii) and (iii) combine to give the following intuitive but not so easy to prove fact.

\begin{cor}\label{large number of H implies large number of F}
	If $H,F$ are graphs such that $F$ does not have isolated vertices and is not isomorphic to a subgraph of $H$, then for all $C>0$ there exists some $C'>0$ such that if $G$ is an $n$-vertex graph with $\num(H,G)\ge C' \ex(n,H,F)$ then $\num(F,G)\ge C$.
\end{cor}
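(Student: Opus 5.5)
We will derive the corollary from \Cref{generalized Turan supersaturation}(ii) and (iii), applied with $\be$ chosen as the growth exponent of $\ex(n,H,F)$. The point is that (ii) and (iii) give lower bounds on $\num(F,G)$ that depend only on $k$, not on $n$. So it suffices to make $k$ large in terms of $C$.

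The first step is to find a suitable $\be<h$ with $\ex(n,H,F)=\Theta(n^\be)$, or at least $\ex(n,H,F)=O(n^\be)$ with a matching lower bound of comparable order, where $h=v(H)$. Since $F$ is not a subgraph of $H$, while $F$ is a subgraph of $K_n$ for large $n$, we have $\ex(n,H,F)=o(n^h)$. Indeed, by the removal lemma (or a simple averaging argument) an $F$-free graph has $o(n^h)$ copies of $H$ whenever $F\not\sub H$. Here the obstacle is that \Cref{generalized Turan supersaturation} requires a polynomial upper bound $O(n^\be)$ with $\be<h$ strictly, together with the hypothesis $\num(H,G)\ge k n^\be$.

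To handle this, I would not insist on an exact exponent. Instead I would use the standard fact that $\ex(n,H,F)=O(n^{h-\ep})$ for some $\ep>0$ whenever $F\not\sub H$. This follows, for example, because blowups are handled by supersaturation/Kővári–Sós–Turán type counting, or because the hypergraph of copies of $H$ avoiding $F$ has polynomial saving. If $\ex(n,H,F)$ is not $\Theta(n^\be)$ for the chosen $\be$, the corollary as stated in terms of $\ex(n,H,F)$ needs $\num(H,G)\ge C'\ex(n,H,F)$ to translate into $\num(H,G)\ge k n^\be$. I expect this translation step to be the main obstacle. I would try to set $\be$ equal to the infimum exponent $\be_0$ such that $\ex(n,H,F)=O(n^{\be_0})$, and then work directly with $\ex(n,H,F)$ in place of $n^\be$ by rerunning the argument. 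The dependence of the theorem on $n^\be$ should only enter through the bound $\ex(n,H,F)\le n^\be/\text{const}$.

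Granting the translation, take $k=C'/c$, where $c$ is the implicit constant. If $f\ge h-\be$, part (ii) gives $\num(F,G)=\Omega(k^{f/(h-\be)})$. Otherwise $f<h-\be$, and since $F$ is not isomorphic to a subgraph of $H$, part (iii) gives $\num(F,G)=\Omega(k^{2/(h-\be-f+2)})$. In both cases the exponent of $k$ is positive. Hence choosing $C'$ (and so $k\ge k_0$) sufficiently large in terms of $C$ and the implicit constants forces $\num(F,G)\ge C$.
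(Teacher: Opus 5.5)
The paper justifies this corollary only by remarking that bounds (ii) and (iii) of \Cref{generalized Turan supersaturation} combine, so your outline matches it. But the translation step you flag and then grant is a genuine gap, and the repair you sketch does not close it.

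Parts (ii) and (iii) need $\num(H,G)\ge kn^\be$. This follows from $\num(H,G)\ge C'\ex(n,H,F)$ only if $\ex(n,H,F)=\Om(n^\be)$, and such a $\be$ is not known to exist in general (already for $H=K_2$, $F=C_8$). Taking $\be_0$ to be an infimum exponent does not help. Nor is it true that $n^\be$ enters the argument only through $\ex(n,H,F)\le n^\be/\mathrm{const}$. The contradiction in \Cref{supersaturationCase1} compares the upper bound on $\ex(m,\c{P})$ at the smaller scale $m\approx pn$ with $e(\c{H}')\ge \prod_i p_i\, e(\c{H})$. With the $\be$-dependent choice of $p$ in \Cref{shrinkingApplied}, one has $\prod_i p_i\, k=Cp^{\be}$. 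So if you normalize by $\ex(n,H,F)$ instead of $n^{\be_0}$, you need $\ex(pn,H,F)=O(p^{\be_0}\ex(n,H,F))$. That is a regularity property of $\ex(\cdot,H,F)$ which you do not have.

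What works is to run the argument with exponent $0$, where the needed regularity is just monotonicity.
\begin{itemize}
\item Since $F$ has no isolated vertices, $\ex(m,H,F)\le \ex(n,H,F)=:E$ for $m\le n$. Also $E\ge 1$ for $n\ge h$, because $H$ plus isolated vertices is $F$-free. For $n<h$ the statement is false as written: take $G$ empty.
\item Pass to the $H$-partite pair $(\c{H},\c{F})=(\c{H}_{G_{\c{V}},\c{V}},\c{F}_{G_{\c{V}}})\in\c{P}'$ from the proof of (iii). It has $e(\c{H})\ge kE$ with $k$ a constant multiple of $\num(H,G)/E$, and no edge of $\c{F}$ lies in an edge of $\c{H}$.
\item Apply \Cref{shrinkingApplied} with $\be=0$; its hypothesis $e(\c{H})\ge k$ holds because $E\ge 1$. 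Then $e(\c{H}')\ge \prod_i p_i\, kE=CE>E\ge \ex(v(\c{H}'),\c{P}')$, so $e(\c{F}')\ge 1$.
\item The type analysis of \Cref{supersaturationCase1} with $\be=0$ then gives $\num(F,G)=\Om(k^{f/h})$ if $f\ge h$ and $\Om(k^{2/(h-f+2)})$ if $f<h$. The constants depend only on $h,f$, not on $E$.
\end{itemize}
Both exponents are positive, which proves the corollary.

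Separately, your claim that $\ex(n,H,F)=o(n^h)$, or $O(n^{h-\ep})$, whenever $F\not\sub H$ is false. For $H$ a path with two edges and $F=K_3$, the triangle-free graph $K_{n/2,n/2}$ has $\Theta(n^3)$ copies of $H$. That case is vacuous for large $C'$, and the exponent-$0$ argument does not need it.
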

That is, any graph with much more than $\ex(n,H,F)$ copies of $H$ contains many copies of $F$ whenever $F$ is not isomorphic to a subgraph of $H$.  The assumption $F$ is not isomorphic to a subgraph of $H$ is necessary in general for this to hold: if $F$ is any graph without isolated vertices and $H=F\cup K_1$, then the graph $G$ which consists of $F$ together with $k$ isolated vertices contains only 1 copy of $F$ despite containing $k\ge k \ex(n,H,F)=0$ copies of $H$.

Because \Cref{generalized Turan supersaturation} is proven with our general supersaturation machinery, we can without effort obtain similar results for related settings.  For example, \textit{the generalized planar Tur\'an number} $\ex_{\planar}(n,H,F)$ is the maximum number of copies of $H$ in an $n$-vertex planar $F$-free graph and is a well-studied variant of the generalized Tur\'an problem \cite{grzesik2022maximum,gyori2021generalized,lv2024maximum}.  Through our general supersaturation results, we can state \Cref{generalized Turan supersaturation} word for word with $\ex(n,H,F)$ replaced throughout by $\ex_{\planar}(n,H,F)$.  We formally record this for the bounds (ii) and (iii) which, as we note in the concluding remarks, are the most interesting cases of \Cref{generalized Turan supersaturation} in the planar setting.

\begin{thm}\label{generalized planar Turan}

	Let $H$ be an $h$-vertex graph, $F$ an $f$-vertex graph without isolated vertices, and $\be<h$ such that $\ex_{\planar}(n,H,\c{F})= O(n^\be)$.  There exists some $k_0$ such that the following holds for any $n$-vertex planar graph $G$ with $\num(H,G)\ge k n^\be$ and $k\ge k_0$: if $f\ge h-\be$,  then\[\num(F,G)=\Omega\left(k^{\frac{f}{h-\be}}\right),\]
	and if $f<h-\be$ and $F$ is not isomorphic to a subgraph of $H$, then
	\[\num(F,G)=\Omega\left(k^{\frac{2}{h-\be-f+2}}\right).\]
\end{thm}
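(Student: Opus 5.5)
We prove \Cref{generalized planar Turan} by repeating the argument for \Cref{generalized Turan supersaturation}(ii) and (iii) inside the class of planar graphs. The point is that those proofs only use closure of the host class under taking subgraphs. Concretely, let $\c{H}_n$ be the $h$-uniform hypergraph on $E(K_n)$ whose edges are the edge sets of copies of $H$, and $\c{F}_n$ the $e(F)$-uniform hypergraph of copies of $F$. The input hypothesis is that every $\c{F}_n$-independent set $I$ in the allowed family spans at most $\ex_{\planar}(n,H,F)=O(n^\be)$ edges of $\c{H}_n$. Since every subgraph of a planar graph is planar, every set we ever apply this hypothesis to (subgraphs of $G$ obtained by deletions or random sampling of vertices or edges) is still planar. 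So the bound applies exactly as $\ex(n,H,F)=O(n^\be)$ did in the nonplanar case.

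\textbf{Case $f\ge h-\be$.} We use the standard sampling argument. Keep each vertex of $G$ independently with probability $p$ and let $G_p$ be the induced subgraph, which is planar. Then $\E[\num(H,G_p)]=p^h\num(H,G)\ge p^hkn^\be$ and $\E[\num(F,G_p)]=p^f\num(F,G)$. Deleting one edge from each copy of $F$ in $G_p$ leaves a planar $F$-free graph. It has at most $C(pn)^\be+O(n_p^{\be})$ copies of $H$, and it loses at most $\num(F,G_p)\cdot O(\Delta_H)$ copies of $H$. Here is the delicate point: one deleted edge can kill many copies of $H$. This is exactly where the paper's general machinery (the Ferber--McKinley--Samotij style weighted supersaturation for hypergraph-weighted independent sets) is invoked rather than naive deletion. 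Accordingly, I would apply the general theorem from which \Cref{generalized Turan supersaturation}(ii) was derived, with the extremal bound $O(n^\be)$ now coming from the planar hypothesis. The output $\Omega(k^{f/(h-\be)})$ comes from choosing $p\approx k^{-1/(h-\be)}$, so that $p^hkn^\be$ dominates $C(pn)^\be$. This choice forces $\Omega(1)$ copies of $F$ in $G_p$, whence $\num(F,G)=\Omega(p^{-f})=\Omega(k^{f/(h-\be)})$. The hypothesis $f\ge h-\be$ ensures that $p n\ge1$ is not needed beyond $k\ge k_0$.

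\textbf{Case $f<h-\be$ with $F$ not a subgraph of $H$.} We mimic (iii) in the same way: take $p$ with $p^{h-\be-f+2}\approx k^{-1}$, again via the general theorem. The non-containment hypothesis is used, as in (iii), to guarantee that each copy of $H$ is not itself a copy of $F$. This makes the relevant codegree and containment condition in the abstract theorem hold, and the conclusion is $\Omega(k^{2/(h-\be-f+2)})$.

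\textbf{Main obstacle.} The main thing to check is that the abstract supersaturation theorem only requires the extremal bound on a hereditary family, and that planarity is hereditary under every operation it uses. I would verify that the proof of \Cref{generalized Turan supersaturation}(ii),(iii) never embeds $G$ into a larger or denser nonplanar graph; for instance, it must not add edges or blow up vertices. Deletions and vertex sampling are fine. With that verified, the planar statement follows word for word.
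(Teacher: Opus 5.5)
Your overall plan is exactly the paper's: planarity survives deleting vertices and edges, so the proof of \Cref{generalized Turan supersaturation}(ii),(iii) transfers through \Cref{supersaturation for f large} to a hereditary family built from planar graphs. The genuine gap is in the second case.

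The second half of \Cref{supersaturation for f large} needs that for \emph{every} pair in the family, no edge of $\c{F}$ lies inside an edge of $\c{H}$. That is not the statement ``a copy of $H$ is not a copy of $F$''. For the natural family $\{(\c{H}_G,\c{F}_G): G\text{ planar}\}$ the condition fails even when $F$ is not a subgraph of $H$, because an $h$-set spanning a copy of $H$ may span further edges of $G$ that create $F$. For example, take $H=P_4$, $F=K_3$, and $G=K_4$ minus an edge, which is planar.

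The missing idea is the $H$-partition reduction:
\begin{itemize}
\item Write $V(H)=\{x_1,\ldots,x_h\}$ and pick a partition $\c{V}=\{V_1,\ldots,V_h\}$ of $V(G)$.
\item Let $G_{\c{V}}$ keep only the edges between $V_i$ and $V_j$ with $x_ix_j\in E(H)$.
\item Let $\c{H}_{G_{\c{V}},\c{V}}$ record only the transversal copies of $H$ in which the vertex of $V_i$ plays $x_i$.
\end{itemize}
Then an edge of $\c{F}_{G_{\c{V}}}$ inside an edge of $\c{H}_{G_{\c{V}},\c{V}}$ would give a copy of $F$ in $H$. These pairs form a hereditary family whose extremal function is at most $\ex(n,\c{P})=O(n^\be)$. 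A uniformly random $\c{V}$ keeps at least $h^{-h}\num(H,G)$ copies in expectation. In the planar setting this is the one place where closure under edge deletion, not just under induced subgraphs, is used ($G_{\c{V}}$ is planar). The exponent $\frac{2}{h-\be-f+2}$ then comes out of \Cref{supersaturationCase1} via $|I_{\mathbf{t}}|\le|\mathbf{t}|-2$, not from a choice $p^{h-\be-f+2}\approx k^{-1}$.

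The setup also needs repair.
\begin{itemize}
\item \emph{Encoding.} The paper's hypergraphs live on $V(G)$: the edges of $\c{H}_G$ are the $h$-sets $S$ with $G[S]$ containing a copy of $H$, and $\c{F}_G$ uses the analogous $f$-sets. So they really are $h$- and $f$-uniform. Your hypergraph of edge sets of copies of $H$ on $E(K_n)$ is $e(H)$-uniform. Feeding it into \Cref{supersaturation for f large} would give exponents in $e(H),e(F)$ over a ground set of size $e(G)$, not the stated bounds. It is also inconsistent with the vertex sampling you then perform.
\item \emph{Transfer inequalities.} The argument uses $e(\c{H}_G)\ge\num(H,G)/h!$, $e(\c{F}_G)\le\num(F,G)$, and $\ex(n,\c{P})\le\ex_{\planar}(n,H,F)$. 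The last requires $\ex_{\planar}(n',H,F)\le\ex_{\planar}(n,H,F)$ for $n'\le n$. This follows by adding isolated vertices, which keeps the graph planar and, because $F$ has no isolated vertices, $F$-free.
\item \emph{Case (ii) heuristic.} The claim that $p\approx k^{-1/(h-\be)}$ ``forces $\Omega(1)$ copies of $F$ in $G_p$'' only compares expectations. $\num(H,G_p)$ need not concentrate: a part of a partite structure with fewer than $p^{-1}$ vertices is typically wiped out. Handling this is precisely the job of \Cref{shrinkTechnical} and \Cref{shrinkingApplied}. Citing \Cref{supersaturation for f large} on the correct family is fine, but the heuristic is not itself a proof.
\end{itemize}
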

In particular, this implies any planar graph $G$ with at least $n^\ep\cdot \ex_{\planar}(n,H,F)$ copies of $H$ for some $\ep>0$ contains at least $n^\del$ copies of $F$ for some $\del>0$ a function of $H,F,\ep$ provided $F$ has no isolated vertices and is not isomorphic to a subgraph of $H$.  This implies non-trivial results on the potential graph profiles for generalized planar Tur\'an numbers, which is a problem initiated by Blekherman and Shi~\cite{Planar}.

\subsection{Extremal Problems for Subsets of Integers}
Many problems in additive combinatorics can be stated in terms of finding large sets of integers $A$ which avoids non-trivial solutions to a system of linear equations $\c{L}$, with notable examples including $k$-AP free sets~\cite{szemeredi1975sets} and Sidon sets~\cite{erdos1941problem}.  Motivated by this and the generalized Tur\'an problem for graphs, we ask: what is the maximum number of solutions to a system of linear equations $\c{K}$ that a set $A$ with $|A|=n$ can have if it contains no non-trivial solution to another system $\c{L}$?  

There exist a few sporadic cases of this general problem in the literature.  Notably, Fox and Pohoata~\cite{fox2021sets} answered a question of Erd\H{o}s by determining how many $\ell$-AP's a set of $n$ integers without any $k$-AP can have, and an observation of Sanders~\cite{sanders2009three} implies that sets of a given size with large additive energy (or equivalently many solutions to the Sidon equation $x_1+x_2=x_3+x_4$) have many 3-AP's.  In this paper we give a more general framework which puts these problems on a common footing.

In what follows, we keep our definitions as abstract as possible both to move us closer in abstraction to our most general supersaturation results, as well as to avoid defining things like what a non-trivial solution to a system is.  We maintain  similar notation to the generalized Tur\'an setting for ease of comparison, though we add a subscript $\N$ in order to avoid potential confusion.

\begin{defn}
	For a set $F\sub 2^{\N}$ of sets of integers and a set $A\sub \N$, we define $\num_\N(F,A)$ to be the number of $e\in F$ with $e\sub A$.  For two sets $H,F\sub 2^{\N}$ and an integer $n$, we define $\ex_\N(n,H,F)$ to be the maximum value of $\num_\N(H,A)$ among all sets $A\sub \N$ with $|A|=n$ and with $\num_\N(F,A)=0$.  For technical convenience we let $\ex_\N(n,H,F)=0$ if no such $A$ exists.
\end{defn}
For example, if $F$ is the set of all $k$-AP's and if $H$ is the set of all $\ell$-AP's, then $\ex_\N(n,H,F)$ exactly asks for the maximum number of $\ell$-AP's that a set of size $n$ can have if it has no $k$-AP, recovering the problem of \cite{fox2021sets}.   Applying our general supersaturation machinery to this setup gives an analog of \Cref{generalized Turan supersaturation}.

\begin{thm}\label{systems}
	Let $h,f,f'$ be positive integers with $f\le f'$.  Let $H,F\sub 2^{\N}$ be such that every set in $H$ has at most $h$ elements, every $e\in F$ has $f\le |e|\le f'$, and $\ex_\N(n,H,F)$ is non-decreasing in $n$.  If $\be<h$ is such that $\ex_\N(n,H,F)=O(n^\be)$, then there exists some $k_0$ such that the following holds for any $A\sub \N$ with $|A|=n$ and $\num_\N(H,A)\ge k n^\be$ for $k\ge k_0$:
	\begin{itemize}
		\item[(i)] We have
		\[\num_\N(F,A)=\Omega\left( k^{\frac{f+h-1-\be}{h-\be}}n^{\be-h+1}\right).\]
		\item[(ii)] If $f\ge h-\be$, then 
		\[\num_\N(F,A)=\Omega\left(k^{\frac{f}{h-\be}}\right).\]
		\item[(iii)] If $f<h-\be$ and if there exists no $e\in F$ and $e'\in H$ with $e\sub e'$, then
		\[\num_\N(F,A)=\Omega\left(k^{\frac{2}{h-\be-\min\{f',\lceil h-\be\rceil-1\}+2}}\right).\]
	\end{itemize}
\end{thm}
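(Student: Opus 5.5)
Since the abstract supersaturation machinery has not been stated at this point, I will argue directly with a deletion/sampling scheme of the kind used for classical Tur\'an supersaturation. I work with the auxiliary hypergraphs on vertex set $A$: $\c{H}_A=\{e\in H: e\sub A\}$ (edges of size at most $h$) and $\c{F}_A=\{e\in F:e\sub A\}$ (edges of size between $f$ and $f'$). The basic tool is the following observation. If $B\sub A$ is a random subset, each element kept independently with probability $p$, then $B$ can be made $\c{F}$-free by deleting one element from each $F$-edge inside $B$. Each deleted element kills at most $\Delta$ edges of $\c{H}_B$, where $\Delta$ bounds the $\c{H}_A$-degrees. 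Monotonicity of $\ex_\N(n,H,F)$ then gives
\[
\num_\N(H,B)-\Delta_B\cdot \num_\N(F,B)\le \ex_\N(|B|,H,F)=O((pn)^\be).
\]
Taking expectations, $\E\num_\N(H,B)\ge p^h kn^\be$ (this uses that edges have at most $h$ elements; I use the crude bound $p^{|e|}\ge p^h$), while $\E\num_\N(F,B)\le p^f\num_\N(F,A)$.

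For (i), I first handle the degree issue by a regularisation step. Iteratively delete elements of $A$ whose $\c{H}$-degree is much larger than average, or equivalently dyadically pigeonhole on codegrees, to pass to a subconfiguration where $\Delta\lesssim \num_\N(H,A)/n$ up to constants. Plugging this in, I choose $p$ so that $p^hkn^\be\approx 2C(pn)^\be$, that is $p\approx k^{-1/(h-\be)}$. This yields
\[
\num_\N(F,A)\gtrsim p^{h-f}kn^\be/\Delta \approx p^{h-f}\,n\,p^{\be}.
\]
Here I need the sharper bound $\Delta\lesssim kn^{\be-1}$, with loss measured by one element, i.e.\ ``$h-1$''. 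This accounts for the exponent $n^{\be-h+1}$, in contrast with $n^{\be-h+2}$ in the graph case, where deleting an edge removes two vertices' worth. Solving for the exponent gives $k^{(f+h-1-\be)/(h-\be)}n^{\be-h+1}$. The crux is making the degree bound hold. My first attempt is a weighted version: each $F$-edge is charged to the $H$-edges it meets via its max-degree element, and a Cauchy--Schwarz/convexity argument replaces $\Delta$ by an average.

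For (ii) and (iii) I use a different choice of $p$, namely taking $p$ so small that $pn$ is roughly constant-scale. For (ii), with $p=k^{-1/(h-\be)}$ we get $\E\num_\N(H,B)\ge C(pn)^\be$ exceeding the extremal bound, so $B$ contains an $F$-edge with constant probability. This gives $p^{f'}\num_\N(F,A)\gtrsim$ const. Refining via $|e|\ge f$, I argue $\num_\N(F,A)\gtrsim p^{-f}=k^{f/(h-\be)}$, using $f\ge h-\be$ to ensure the Markov/second-moment comparison goes through. For (iii), I exploit that no $F$-edge lies inside an $H$-edge. This means each $F$-edge in $B$ meets any $H$-edge in a proper way, so it cannot be destroyed cheaply, and I can trade off a second parameter $q$ with $\min\{f',\lceil h-\be\rceil-1\}$ elements. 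Optimising then gives the exponent $2/(h-\be-\min\{f',\lceil h-\be\rceil -1\}+2)$.

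I expect (iii) to be the main obstacle: getting a positive power of $k$ when $F$-edges are small. There I would first try a two-round sampling in which the second round samples only elements of $H$-edges already present.
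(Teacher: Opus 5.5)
\paragraph{Part (i).} Your mechanism for (i), a random sample followed by deletion, is the right one; the paper proves (i) this way through \Cref{FMS bound} and \Cref{FMS corollary}. However, the step you call the crux is both unnecessary and impossible in general. Take a fixed $f$-set $e_0$, $F=\{e_0\}$ and $H=\{e_0\cup\{x\}:x\notin e_0\}$. Then $h=f+1$, $\ex_\N(n,H,F)=0$ (so $\be=0$ is allowed), and every $A\supseteq e_0$ has $\num_\N(H,A)=n-f$ but $\num_\N(F,A)=1$. Every element of $e_0$ lies in all $H$-edges, so no subconfiguration keeping a constant fraction of them has $\Delta\lesssim \num_\N(H,\cdot)/|\cdot|$. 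Even granting the regularisation, your chain gives $\num_\N(F,A)\gtrsim p^{h-f}n$, which is not the stated exponent and is false in this example for bounded $k$.

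The $h-1$ comes instead from the trivial degree bound inside the sample. Take $B$ uniformly among the $pn$-element subsets of $A$, with $p\approx Ck^{-1/(h-\be)}$; fixing $|B|$ avoids correlations with the sample size. Each element of $B$ lies in at most $(pn)^{h-1}$ sets of size at most $h$ inside $B$. Deletion and monotonicity then give $\num_\N(H,B)-(pn)^{h-1}\num_\N(F,B)\le \ex_\N(pn,H,F)$. Taking expectations and using $p^hkn^\be=C^{h-\be}(pn)^\be$,
\[\Omega\bigl(C^{h-\be}(pn)^\be\bigr)-(pn)^{h-1}p^f\num_\N(F,A)\le O\bigl((pn)^\be\bigr).\]
For $C$ large this forces $\num_\N(F,A)\gtrsim p^{1-f}kn^{\be-h+1}$, which is exactly the bound in (i).

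\paragraph{Parts (ii) and (iii).} Here there is a genuine gap. The inference ``$\E\num_\N(H,B)$ exceeds the extremal bound, so $B$ contains an $F$-edge with constant probability'' is false, because $\num_\N(H,B)$ need not concentrate. In the example above, $\E\num_\N(H,B)=p^{f+1}(n-f)$ exceeds $\ex_\N=0$. Yet $\num_\N(H,B)=0$ unless $e_0\sub B$, an event of probability $p^f\to 0$. More generally, if some part of a partite $\c{H}_A$ has size $\ll p^{-1}$, then $\num_\N(H,B)=0$ with high probability.

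Markov alone gives $\Pr[B \text{ contains an } F\text{-edge}]\le p^f\num_\N(F,A)$ for every $f$. So your argument never genuinely uses $f\ge h-\be$, and it would ``prove'' $\num_\N(F,A)\gtrsim k^{f/(f+1)}$ in the example, where $\num_\N(F,A)=1$.

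The missing idea is the paper's non-uniform shrinking:
\begin{itemize}
\item pass to an $\hat h$-partite subhypergraph (\Cref{partite});
\item shrink one part at a time (\Cref{shrinkTechnical}), either to a single vertex carrying a $q_v\ge p$ fraction of the $H$-edges, or by sampling only the low-degree vertices at rate $10p$, where Chebyshev does give concentration;
\item whenever a part collapses to a singleton, recompute the target rate as $p=(C^{-1}k\prod_{|V_i'|=1}p_i)^{-1/(h-\be-s)}$ (\Cref{shrinkingApplied});
\item track $F$-edges by type, so that one can force $e(\c{F}')=0$ while $e(\c{H}')$ still beats $\ex(m,\c{P})$.
\end{itemize}
The hypothesis $f\ge h-\be$ is what absorbs the larger survival rates $p_i\ge p$ of vertices in singleton parts. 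In (iii), non-containment forces every relevant type to have some $t_i\ge 2$, so $|I_{\mathbf t}|\le|\mathbf t|-2$. That is exactly where the exponent $2/(h-\be-|\mathbf t|+2)$ comes from. Your ``second parameter $q$'' and two-round sampling sketch contains none of this.

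The paper's proof of the statement itself is then a short reduction. It encodes $A$ as $(\c{H}_A,\c{F}_A)$ and checks heredity, boundedness, and $\ex(n,\c{P})\le \ex_\N(n,H,F)$ via monotonicity. It then applies \Cref{FMS corollary} for (i) and \Cref{supersaturation for f large} for (ii) and (iii).
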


Here we allow $H,F$ to have different sized elements since (non-trivial) solutions to a system of equations often allow for repeated elements. Something like the awkward hypothesis in (iii) is needed to obtain any non-trivial supersaturation result in general: if $F\sub 2^\N$ is such that there exists an infinite set $A\sub \N$ which contains exactly one element of $F$ as a subset and if $H=\{e\cup \{n\}:e\in F,\ n\in \N\}$, then $\num_\N(H,A)$ is infinite despite only having $\num_\N(F,A)=1$.

\subsection{Extremal Problems as (Weighted) Independent Sets}
Many problems in extremal combinatorics can be interpreted as determining the size of a largest independent set in an auxiliary hypergraph.  For example, the Tur\'an problem of asking for the maximum number of edges in an $n$-vertex $F$-free graph is the same as asking for the largest size of an independent set in the hypergraph $\mathcal{F}$ whose vertex set is $E(K_n)$ and whose hyperedges correspond to copies of $F$ in $K_n$.  This independent set perspective comes with many advantages, such as allowing one to make use of the powerful method of hypergraph containers.

Part of the goal of this paper is to establish a generalization of this independent set approach by considering independent sets which are weighted by another hypergraph.

\begin{defn}
	For a hypergraph $\c{F}$ (possibly with repeated edges) and a set $S\sub V(\c{F})$, we write $\c{F}[S]$ to denote the induced subhypergraph of $\c{F}$ which has vertex set $S$ and all edges of $\c{F}$ which are contained in $S$.  We say $I\sub V(\c{F})$ is an independent set of $\c{F}$ if $e(\c{F}[I])=0$ and we write $\c{I}(\c{F})$ to denote the set of independent sets of $\c{F}$.  If $\c{H},\c{F}$ are hypergraphs
	with $V(\c{H})=V(\c{F})$ then we say that $(\mathcal{H},\mathcal{F})$ is a \textit{hypergraph pair} and we define
	\[\al_{\c{H}}(\c{F})=\max_{I\in \c{I}(\c{F})} e(\mathcal{H}[I]).\]
	That is, $\al_{\c{H}}(\c{F})$ is the maximum number of edges an independent set of $\c{F}$ can induce in $\mathcal{H}$.
\end{defn}
For example, if $\c{H}$ is the complete 1-uniform hypergraph on $V(\c{F})$ then $\al_{\c{H}}(\c{F})$ is just the maximum cardinality of an independent set of $\c{F}$. 

Our motivation for $\al_{\c{H}}(\c{F})$ comes from Tur\'an problems where the usual independent set perspective is ineffective.  For example, the standard way to translate the generalized Tur\'an problem into an independent set problem would be to consider the hypergraph $\c{F}'$ whose vertex set consists of copies of $H$ in $K_n$ and whose hyperedges are sets of copies of $H$ whose union contains a copy of $F$ in $K_n$.   The structure of $\c{F}'$ is rather complex, and for most choices of $H,F$ the hypergraph will not be uniform.   A cleaner way to view the generalized Tur\'an problem is to consider a pair of hypergraphs $\c{H},\c{F}$ with vertex set $E(K_n)$ where the hyperedges of $\c{H},\c{F}$ encode copies of $H,F$ respectively.  The generalized Tur\'an problem can then be expressed as finding $\al_{\c{H}}(\c{F})$, which has the advantage of both hypergraphs $\c{H},\c{F}$ being uniform and much simpler in structure than $\c{F}'$.

Our other guiding example comes from the problem of counting $n$-vertex $F$-free graphs.  After a long history of sporadic results \cite{balogh2011number,balogh2011number2,morris2016number}, this problem was essentially resolved in full by Ferber, McKinley, and Samotij~\cite{ferber2020supersaturated}.  A key ingredient in their work is a supersaturation result which implies that graphs $G$ with many edges contain many copies of $F$, and crucially they did not prove this by working with the most natural hypergraph $\c{F}'$ whose vertex set is $E(G)$ and whose hyperedges are copies of $F$, but rather with respect to the hypergraph $\c{F}$ whose vertex set is $V(G)$ and whose hyperedges are copies of $F$.  From this perspective, the ``correct'' measure of independent sets of $\c{F}$ is not how large they are, but rather how many edges of $G$ they induce, i.e.\ how many edges of the hypergraph $\c{H}$ with vertex set $V(G)$ and edge set $E(G)$ these independent sets of $\c{F}$ induce.  As such, their proof approach can be naturally stated in the language of $\al_{\c{H}}(\c{F})$.

Motivated by this work of Ferber, McKinley, and Samotij, we focus our exploration of $\al_{\c{H}}(\c{F})$ by studying supersaturation results; that is, results of the form that if $S\sub V(\c{H})=V(\c{F})$ has $e(\c{H}[S])\gg \al_{\c{H}}(\c{F})$, then $S$ induces many edges in $\c{F}$.  It is too much to expect  meaningful results for arbitrary pairs $(\c{H},\c{F})$, and as such we restrict our attention to families of pairs with somewhat reasonable behaviors.
	
\begin{defn}
	For positive integers $h,f,f'$ with $f\le f'$, we say that a hypergraph pair $(\c{H},\c{F})$ is \textit{$(h,f,f')$-bounded} if every edge of $\c{H}$ has at most $h$ elements and every edge $e\in E(\c{F})$ has  $f\le |e|\le f'$.  We say a set $\c{P}$ of hypergraph pairs is \textit{hereditary} if for every $(\c{H},\c{F})\in \c{P}$ and $S\sub V(\c{H})=V(\c{F})$ we have $(\c{H}[S],\c{F}[S])\in \c{P}$.  For each integer $n$ we define \[\c{P}_n=\{(\c{H},\c{F})\in \c{P}: |V(\c{H})|=|V(\c{F})|=n\},\] \[\ex(n,\c{P})=\max\{\al_{\c{H}}(\c{F}):(\c{H},\c{F})\in \c{P}_n\}\cup \{0\}.\]
\end{defn}

For hereditary $\c{P}$, the supersaturation condition ``every $(\c{H},\c{F})\in \c{P}$ and $S\sub V(\c{H})=V(\c{F})$ with $e(\c{H}[S])$ large has $e(\c{F}[S])$ large'' is equivalent to the simpler condition ``every $(\c{H},\c{F})\in \c{P}$ with $e(\c{H})$ large has $e(\c{F})$ large'' since the induced subhypergraph pair $(\c{H}[S],\c{F}[S])$ is also in $\c{P}$.  Because of this, we state our results in terms of this simpler edge condition.  We prove two main supersaturation results in this setting, the first of which generalizes the approach of \cite{ferber2020supersaturated}.

\begin{thm}\label{FMS bound}
	Let $\c{P}$ be a hereditary family of $(h,f,f')$-bounded hypergraph pairs and $\be<h$ a real number such that $\ex(n,\c{P})=O(n^\be)$.  If there exists real numbers $c,c'>0$ and $d<h$ such that $\ex(n,\c{P})\ge ce(\c{H})-c'e(\c{F})n^d$ for every $(\c{H},\c{F})\in \c{P}_n$ and every $n$, then there exists some $k_0$ such that any $(\c{H},\c{F})\in \c{P}_n$ with $e(\c{H})\ge k n^\be$ and $k\ge k_0$ has
	\[e(\c{F})=\Omega\left(k^{\frac{f+d-\be}{h-\be}}n^{\be-d}\right).\]
\end{thm}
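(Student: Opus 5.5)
The plan is to adapt the random-sampling argument of Ferber, McKinley and Samotij. I will pass to a random induced subpair $(\c{H}[S],\c{F}[S])$ on $m\approx pn$ vertices. Since $\c{P}$ is hereditary, this subpair lies in $\c{P}_m$, so the hypothesis $\ex(m,\c{P})\ge c\,e(\c{H}[S])-c'e(\c{F}[S])m^d$ applies to it. I will take expectations and then choose $p$ so that the $\c{H}$-term beats the extremal bound $\ex(m,\c{P})=O(m^\be)$, which forces the $\c{F}$-term to be large.

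\textbf{Step 1: sampling.} Fix $(\c{H},\c{F})\in\c{P}_n$ with $e(\c{H})\ge kn^\be$, and let $C$ be such that $\ex(m,\c{P})\le Cm^\be$ for all $m\ge 1$. Choose $p\in(0,1]$ to be determined, set $m=\lceil pn\rceil$, and let $S$ be a uniformly random $m$-subset of $V(\c{H})$.
\begin{itemize}
\item An edge $e$ with $|e|=j\le h$ lies in $S$ with probability $\binom{n-j}{m-j}/\binom{n}{m}=\prod_{i<j}\frac{m-i}{n-i}$. Provided $m\ge 2h$, this is at least $(p/2)^h$.
\item An edge of $\c{F}$ has size at least $f$, so it lies in $S$ with probability at most $(m/n)^f\le (2p)^f$, using $pn\ge 1$.
\end{itemize}
Hence $\E[e(\c{H}[S])]\ge (p/2)^h kn^\be$ and $\E[e(\c{F}[S])]\le (2p)^f e(\c{F})$. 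The pair $(\c{H}[S],\c{F}[S])$ is in $\c{P}_m$, and both sides of the hypothesis are linear in the edge counts. Taking expectations therefore gives
\[C(2pn)^{\be}\;\ge\;\ex(m,\c{P})\;\ge\; c(p/2)^h kn^\be - c'(2p)^f e(\c{F})(2pn)^d .\]
If $\be<0$, I would instead bound $\ex(m,\c{P})\le C(pn)^\be$ directly; only constant factors change.

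\textbf{Step 2: choosing $p$.} Take $p=Ak^{-1/(h-\be)}$ with $A$ a large constant, chosen so that $c(p/2)^hkn^\be\ge 2C(2pn)^\be$. This works because the ratio of the two sides is $\Theta(p^{h-\be}k)=\Theta(A^{h-\be})$. Then $p\le 1$ once $k\ge k_0$, and rearranging gives
\[e(\c{F})=\Omega\!\left(p^{h-f-d}\,k\,n^{\be-d}\right)=\Omega\!\left(k^{1-\frac{h-f-d}{h-\be}}n^{\be-d}\right)=\Omega\!\left(k^{\frac{f+d-\be}{h-\be}}n^{\be-d}\right),\]
which is exactly the claimed bound.

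\textbf{Main obstacle: ensuring $m\ge 2h$.} The remaining technical point is the requirement $m\ge 2h$, which I used both for the falling-factorial estimate and implicitly for $pn\ge 1$. I would get it from the trivial bound $e(\c{H})\le\sum_{j\le h}\binom{n}{j}\le 2n^h$, valid if $\c{H}$ has no repeated edges. This gives $k\le 2n^{h-\be}$, hence $pn\ge A2^{-1/(h-\be)}$, which is at least $2h$ once $A$ is large. If $\c{H}$ may have repeated edges, this trivial bound fails. In that case I would instead observe that when $pn<2h$ one may take $S=V(\c{H})$, i.e.\ $p=1$ in the inequality. The hypothesis then yields $e(\c{F})\gtrsim kn^{\be-d}$, and this is at least the claimed bound because $k^{\frac{f+d-\be}{h-\be}}\le k$ when $f+d\le h$. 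Should $f+d>h$, the bound $n=O(k^{1/(h-\be)})$ from $pn<2h$ converts powers of $n$ into powers of $k$ to close the gap. If neither route works cleanly, I would add a mild finiteness assumption on edge multiplicities.
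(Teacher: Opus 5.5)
Your proof is correct and is essentially the paper's own argument. Both pass to a uniformly random set of about $pn$ vertices with $p=\Theta(k^{-1/(h-\be)})$, take expectations in the hypothesis for the induced pair, and let the $\c{H}$-term beat $\ex(pn,\c{P})=O((pn)^\be)$. The paper phrases this as a contradiction and simply asserts that the constants can be chosen so that $pn\ge h$; your bound $e(\c{H})\le 2n^h$ is what actually justifies that. Your patch for repeated edges in the case $f+d>h$ cannot work, though, because the deficit is the pure power $k^{(f+d-h)/(h-\be)}$ with no factor of $n$ to trade. In fact the statement is false when $\c{H}$ has unbounded multiplicities. Take $\c{P}$ to be all pairs of $2$-uniform multi-hypergraphs in which every vertex pair has $\c{H}$-multiplicity at most its $\c{F}$-multiplicity. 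Then $\ex(n,\c{P})=0$ and the hypothesis holds with $c=c'=d=1$, yet a single pair of multiplicity $k$ on $n=2$ vertices has $e(\c{F})=k$, not $\Omega(k^{3/2})$. So the bounded-multiplicity assumption you fall back on, which the paper makes tacitly, is genuinely needed.
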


The technical parameters $c,c',d$ roughly corresponds to the existence of an effective deletion argument for the problem at hand.  We will show that the conditions of this proposition are always satisfied when $d=h-1$, giving a general bound.
\begin{cor}\label{FMS corollary}
	Let $\c{P}$ be a hereditary family of $(h,f,f')$-bounded hypergraph pair and $\be<h$ a real number such that $\ex(n,\c{P})=O(n^\be)$.  There exists some $k_0$ such that any $(\c{H},\c{F})\in \c{P}_n$ with $e(\c{H})\ge k n^\be$ and $k\ge k_0$ has
	\[e(\c{F})=\Omega\left( k^{\frac{f+h-1-\be}{h-\be}}n^{\be-h+1}\right).\]
\end{cor}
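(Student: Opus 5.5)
Corollary~\ref{FMS corollary} follows from Theorem~\ref{FMS bound} with $d=h-1$. So we need constants $c,c'>0$ with
\[\ex(n,\c{P})\ge c\,e(\c{H})-c'e(\c{F})n^{h-1}\]
for every $(\c{H},\c{F})\in\c{P}_n$. Since $h-1<h$ and $\frac{f+d-\be}{h-\be}=\frac{f+h-1-\be}{h-\be}$, Theorem~\ref{FMS bound} then gives exactly the stated bound.

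We prove the inequality by deleting vertices. Given $(\c{H},\c{F})\in\c{P}_n$, choose a set $T\sub V(\c{F})$ meeting every edge of $\c{F}$ with $|T|\le e(\c{F})$, for instance one vertex from each edge. Let $S=V\sm T$. Then $S$ is an independent set of $\c{F}$. Because $\c{P}$ is hereditary, $(\c{H}[S],\c{F}[S])\in\c{P}$ has $e(\c{F}[S])=0$, so $e(\c{H}[S])\le\al_{\c{H}[S]}(\c{F}[S])\le\ex(|S|,\c{P})$. Removing $T$ destroys only the edges of $\c{H}$ that meet $T$.

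The key step is to bound how many edges of $\c{H}$ contain a given vertex $v$, by roughly $n^{h-1}$. The catch is that $\c{H}$ may have repeated edges, so the raw degree of $v$ need not be $O(n^{h-1})$. We use the hereditary property again. Suppose a vertex $v$ lies in some edge of $\c{H}$ that contains no edge of $\c{F}$; if no such edge exists, fine. Apply the extremal bound to the induced pair on that edge's vertex set, which has at most $h$ vertices: the copies of that edge are induced in an independent set, so the multiplicity of any $\c{F}$-independent edge is at most $\ex(h,\c{P})=O(1)$. Edges of $\c{H}$ that do contain an edge of $\c{F}$ can be charged to $\c{F}$ separately. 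Thus the number of $\c{F}$-independent edges of $\c{H}$ through $v$ is at most $C\binom{n}{h-1}\cdot h = O(n^{h-1})$, summing over at most $\sum_{j\le h}\binom{n}{j-1}$ underlying vertex sets.

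The main obstacle is the edges of $\c{H}$ that contain an edge of $\c{F}$, since their multiplicity is unbounded. To handle them, run the argument on the pair restricted to the $\c{F}$-independent edges of $\c{H}$: those are exactly the edges that can survive inside $S$. For $S$ as above, every edge of $\c{H}[S]$ is $\c{F}$-independent, so
\[e(\c{H}[S])\ge e(\c{H}_{\mathrm{ind}})-|T|\cdot O(n^{h-1}).\]
Here $\c{H}_{\mathrm{ind}}$ denotes the $\c{F}$-independent edges. So it suffices to show $e(\c{H})-e(\c{H}_{\mathrm{ind}})\le O(e(\c{F})n^{h-1})$. If the multiplicities of the dependent edges are not controlled, we instead use that $\ex(n,\c{P})$ is finite. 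The set of edges containing an $\c{F}$-edge $g$ lies among at most $O(n^{h-|g|})$ vertex sets. For the multiplicity, we hope a cruder bound suffices, or we reduce to the case where the dependent edges are dominated. I expect this to be the delicate point.

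Finally, using monotonicity in the form $\ex(|S|,\c{P})\le\ex(n,\c{P})$, or passing to a maximum over $m\le n$ (harmless given $\ex(n,\c{P})=O(n^\be)$ in Theorem~\ref{FMS bound}), we obtain
\[\ex(n,\c{P})\ge e(\c{H})-C'e(\c{F})n^{h-1}.\]
This gives $c=1$, $c'=C'$.
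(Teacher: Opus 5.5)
Your reduction is the paper's: apply Theorem~\ref{FMS bound} with $d=h-1$, and get $\ex(n,\c{P})\ge e(\c{H})-e(\c{F})n^{h-1}$ by deleting one vertex from each edge of $\c{F}$. However, the proposal never establishes this inequality. The step you flag as delicate, controlling the edges of $\c{H}$ that contain an edge of $\c{F}$, is left as a hope, yet your final display asserts the inequality anyway.

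Worse, in the generality you set up (arbitrary multiplicities in $\c{H}$) that step cannot be closed, because both the inequality and the corollary are then false. Take $h=f=f'=2$. Let $\c{P}$ consist of all pairs in which $\c{F}$ is a graph $G$ and $\c{H}$ is $G$ with each edge repeated $m$ times, $m\ge 1$ arbitrary. This family is hereditary and $(2,2,2)$-bounded. An independent set of $\c{F}$ spans no edge of $\c{H}$, so $\ex(n,\c{P})=0$ and $\be=0$ is allowed. Now take one edge of multiplicity $m=n^2$ together with $n-2$ isolated vertices. This gives $e(\c{H})=n^2$ and $e(\c{F})=1$. That violates $0\ge c\,e(\c{H})-c'e(\c{F})n$, and also the conclusion $e(\c{F})=\Om(k^{3/2}n^{-1})=\Om(n^2)$ at $k=n^2$. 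So no cruder bound or domination argument can rescue the dependent edges.

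What is needed is the convention the paper's proof uses implicitly: it asserts directly that each vertex lies in at most $n^{h-1}$ edges of $\c{H}$, i.e.\ it reads $\c{H}$ as having no repeated edges. Repeats in $\c{F}$ are harmless, since your transversal still has $|T|\le e(\c{F})$. With $\c{H}$ simple, a vertex lies in at most $\sum_{i=0}^{h-1}\binom{n-1}{i}\le n^{h-1}$ edges of $\c{H}$. Hence $e(\c{H}[S])\ge e(\c{H})-|T|\,n^{h-1}\ge e(\c{H})-e(\c{F})n^{h-1}$. Since $(\c{H},\c{F})$ itself lies in $\c{P}_n$ and $S$ is independent in $\c{F}$, you get $\ex(n,\c{P})\ge\al_{\c{H}}(\c{F})\ge e(\c{H}[S])$. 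That is the paper's two-line proof, with $c=c'=1$.

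Two parts of your argument then become unnecessary. One is the multiplicity bound for $\c{F}$-independent edges. The other is the detour through $\ex(|S|,\c{P})$, which was also unjustified: heredity does not make $\ex(\cdot,\c{P})$ non-decreasing in $n$. A hereditary family can have pairs on $3$ vertices with $\al_{\c{H}}(\c{F})=3$ while every pair on $4$ vertices has $\al_{\c{H}}(\c{F})=0$. Comparing directly with $\al_{\c{H}}(\c{F})$ avoids the issue.
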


These bounds are best possible if $\c{H}$ has $e(\c{H})=\Theta(n^{h})$ and if $\c{F}$ is $f$-uniform.  For small $k$ these bounds can be weak, and in particular if $d>\be$ then $k$ needs to be at least polynomial in $n$ in order for \Cref{FMS bound} to guarantee at least 2 edges of $\c{F}$ despite $e(\c{H})=kn^\be$ being polynomial larger than the extremal function $\ex(n,\c{P})$ which forces at least 1 edge.  Our second main result which uses a more involved argument largely gets around this type of issue.

\begin{thm}\label{supersaturation for f large}
	Let $\c{P}$ be a hereditary family of $(h,f,f')$-bounded hypergraph pairs, and let $\be<h$ be a real number such that $\ex(n,\c{P})=O(n^\be)$.  There exists some $k_0$ such that the following holds: if $f\ge h-\be$, then any $(\c{H},\c{F})\in \c{P}_n$ with $e(\c{H})\ge k n^\be$ and $k\ge k_0$ has
	\[ e(\c{F})=\Omega\left(k^{\frac{f}{h-\be}}\right).\]
	If $f<h-\be$ and if there does not exist $(\c{H},\c{F})\in \c{P}$ and $e\in \c{F},e'\in \c{H}$ with $e\sub e'$, then any $(\c{H},\c{F})\in \c{P}_n$ with $e(\c{H})\ge k n^\be$ and $k\ge k_0$ has
	\[e(\c{F})=\Omega\left(k^{\frac{2}{h-\be-\min\{f',\lceil h-\be\rceil-1\}+2}}\right).\]
\end{thm}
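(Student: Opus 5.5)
We work with a minimal counterexample-free sampling argument. Fix $(\c{H},\c{F})\in\c{P}_n$ with $e(\c{H})\ge kn^\be$, and let $C$ be a constant with $\ex(m,\c{P})\le Cm^\be$ for all $m$. Because $\c{P}$ is hereditary, for every $S\sub V(\c{H})$ we know that $(\c{H}[S],\c{F}[S])\in\c{P}_{|S|}$. Hence $\c{H}[S]$ has an independent set of $\c{F}[S]$ inducing at most $C|S|^\be$ edges. Equivalently, deleting one vertex from each edge of $\c{F}[S]$ yields a set $I$ with $e(\c{H}[I])\le C|S|^\be$.

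The plan is to choose a random subset $S$ of size roughly $m$, making $e(\c{H}[S])$ much larger than $Cm^\be$. We then bound from below the number of $\c{H}$-edges that must be destroyed by deleting one vertex per $\c{F}$-edge inside $S$.

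\textbf{Case $f\ge h-\be$.} First reduce $n$. If $n$ is much larger than $k^{1/(h-\be)}$, pass to a random subset $S$ of size $m\approx (k)^{1/(h-\be)}\cdot$ const. The point is that $\c{H}$ has at most $n^h$ edges, so $k\le n^{h-\be}$ always, and a uniform random $m$-subset keeps in expectation $e(\c{H})(m/n)^{h}\gtrsim k n^{\be}(m/n)^h$ edges. That quantity is $\ge$ a large constant times $m^\be$ only if $m\gtrsim n$; so the subsampling must be weighted toward vertices of high $\c{H}$-degree.

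Instead, I would argue directly. Delete vertices of $\c{F}$-edges greedily. Each vertex $v$ deleted removes $\deg_{\c{H}}(v)$ edges, and the number of edges of $\c{H}$ lost in total must be at least $e(\c{H})-Cn^\be\ge e(\c{H})/2$. To turn this into an $\c{F}$-count, use that any vertex set $T$ spanning $\ge 2C|T|^\be$ $\c{H}$-edges must contain an $\c{F}$-edge. Consider the set $T$ of vertices lying in $\c{H}$-edges; since $e(\c{H}[T])\le |T|^h$, we get $|T|\ge (kn^\be)^{1/h}$. The key is a counting/averaging lemma: among all $t$-subsets of $T$ with $t\approx$ const$\cdot k^{1/(h-\be)}$ chosen to make $e(\c{H}[\cdot])\ge 2Ct^\be$ on average, each contains an $\c{F}$-edge. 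Averaging over subsets (each $\c{F}$-edge of size $\le f'$, at least $f$, lies in a $(t/|T|)^{|e|}$ fraction of them) then gives $e(\c{F})\gtrsim (|T|/t)^{f}\cdot$ appropriate factors. Optimizing yields $k^{f/(h-\be)}$ when $f\ge h-\be$, because then the $\c{F}$-edges are ``larger'' than the density deficit. I would make this rigorous via the supersaturation lemma underlying \Cref{FMS bound}, applied inside a suitable dense piece of $\c{H}$ of order $k^{1/(h-\be)}$ vertices. I would locate that piece by iteratively removing low-degree vertices (degree $<\epsilon e/|T|$), a standard step that preserves half the edges and forces min-degree conditions.

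\textbf{Case $f<h-\be$.} Here the hypothesis that no $\c{F}$-edge lies inside an $\c{H}$-edge is used. Every $\c{F}$-edge meets at least two distinct $\c{H}$-edges' worth of vertices, so pairs of $\c{H}$-edges sharing many vertices are what produce $\c{F}$-edges. Set $r=\min\{f',\lceil h-\be\rceil-1\}$. The plan is to count pairs $(e_1,e_2)$ of $\c{H}$-edges whose union contains an $\c{F}$-edge and which overlap in roughly $r$ vertices. In a dense piece of size $m\approx k^{1/(h-\be)}$, the union of two such edges has at most $2h-r$ vertices, and a Cauchy--Schwarz count gives the exponent $2/(h-\be-r+2)$. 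The main obstacle is this second case: showing that extremality forces $\c{F}$-edges to sit across overlapping pairs of $\c{H}$-edges with controlled overlap, and getting the precise exponent. I would first try an iterated deletion argument, removing one $\c{F}$-edge vertex at a time while tracking the $\c{H}$-degree it kills, combined with a dyadic pigeonhole on codegrees.
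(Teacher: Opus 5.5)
\textbf{Case $f\ge h-\be$.} Your proposal has a genuine gap, and it is exactly where the theorem is hard.

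Your ``counting/averaging lemma'' needs every random $t$-set $S$, or at least a constant fraction of them, to satisfy $e(\c{H}[S])\ge 2C|S|^\be$. Only then does $\Pr[e(\c{F}[S])>0]\le e(\c{F})\max_e\Pr[e\sub S]$ force $e(\c{F})$ to be large. Knowing this only \emph{on average} gives nothing, because $e(\c{H}[S])$ need not concentrate. For example, if $\c{H}$ is $h$-partite with a part much smaller than $p^{-1}$ (say a single vertex lying in every edge), then a uniform $p$-sample has no $\c{H}$-edges with high probability, although its mean is huge.

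The lower tail, not the mean, is the obstruction. Your computation rejecting uniform sampling uses the wrong scale. For $|S|=pn$ with $p\approx(C/k)^{1/(h-\be)}$ one has $\E[e(\c{H}[S])]\gtrsim p^hkn^\be\approx C(pn)^\be$, and $p^fe(\c{F})<1$ exactly when $e(\c{F})\ll k^{f/(h-\be)}$.

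Neither of your fallbacks repairs the concentration problem:
\begin{itemize}
\item The argument of \Cref{FMS bound} avoids concentration only through linearity of expectation together with a deletion inequality with exponent $d$. The universally available choice $d=h-1$, which is what your greedy deletion amounts to, gives only \Cref{FMS corollary}. That bound is vacuous unless $k$ is polynomial in $n$.
\item Pruning low-degree vertices cannot produce a balanced dense piece. A complete $h$-partite $\c{H}$ with some parts of size $1$ and the rest of size $n$ has no low-degree vertices, yet uniform sampling still loses the small parts.
\end{itemize}

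Your remark that the sampling must favour high-degree vertices is the right instinct, and the paper turns it into the proof. After passing to a partite subhypergraph (\Cref{partite}), each part is handled in one of two ways:
\begin{itemize}
\item it is collapsed to one heavy vertex, chosen via the type counts $e_{\mathbf t}(\c{F})$ so that few $\c{F}$-edges through it survive; or
\item its light vertices are sampled at rate about $p$, where Chebyshev works because each light vertex lies in fewer than $p\,e(\c{H})$ edges.
\end{itemize}
Each collapse to a vertex with rate $q>p$ lowers the target rate for the other parts; this is the iteration of \Cref{shrinkingApplied}. It is what allows $e(\c{F}')=0$ while $e(\c{H}')\ge\prod_ip_i\,e(\c{H})$ still exceeds $\ex(v(\c{H}'),\c{P})$.

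\textbf{Case $f<h-\be$.} Here you give no argument. Nothing forces an $\c{F}$-edge into the union of two overlapping $\c{H}$-edges, and $r=\min\{f',\lceil h-\be\rceil-1\}$ is not an overlap size.

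More seriously, the exponent $2/(h-\be-r+2)$ you aim for cannot be proved when $f<r$, because the second bound is false as stated. Take $h=4$, $\be=0$, $f=2$, $f'=3$. Let $\c{P}$ consist of the induced subpairs of $(\c{H}_m,\c{F}_m)$, where $\c{H}_m$ is the complete $4$-partite $4$-graph with parts of size $m$ and $\c{F}_m$ is all pairs inside a part. Then:
\begin{itemize}
\item $\c{P}$ is hereditary and $(4,2,3)$-bounded;
\item every independent set of $\c{F}$ induces at most one edge of $\c{H}$, so $\ex(n,\c{P})=O(1)$;
\item no pair inside a part lies in an edge of $\c{H}$.
\end{itemize}
Yet $e(\c{H}_m)=m^4=k$ while $e(\c{F}_m)<2m^2=2k^{1/2}=o(k^{2/3})$.

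The paper's final step uses $|\mathbf t|\le r$. That bounds $2/(h-\be-|\mathbf t|+2)$ from above, not below. What its argument actually gives is $\Omega(k^{2/(h-\be-f+2)})$, consistent with \Cref{generalized Turan supersaturation}(iii), where $f=f'$.

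The mechanism is again the non-uniform sampling:
\begin{itemize}
\item The number of surviving $\c{F}$-edges of type $\mathbf t$ is at most a constant times $p^{|\mathbf t|-|I_{\mathbf t}|}\prod_{i\in I_{\mathbf t}}p_i\,e_{\mathbf t}(\c{F})$.
\item The no-containment hypothesis rules out a surviving $\c{F}$-edge with all of its vertices in single-vertex parts. Once vertices isolated in $\c{H}'$ are discarded, it also rules out one with all but one of its vertices there, since a non-isolated vertex shares an $\c{H}'$-edge with every single-vertex part. Hence $|I_{\mathbf t}|\le|\mathbf t|-2$.
\item The relation $p^{-(h-\be-s)}=C^{-1}k\prod_{i:|V_i'|=1}p_i$ then yields the exponent $2/(h-\be-|\mathbf t|+2)\ge 2/(h-\be-f+2)$.
\end{itemize}
Nothing in your sketch plays this role.
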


The first bound of \Cref{supersaturation for f large} has a number of appealing features: it is best possible in the extremal case $e(\c{H})=\Theta(n^h)$ when $\c{F}$ is $f$-uniform, it improves upon \Cref{FMS bound} precisely in the range $d>\be$ where \Cref{FMS bound} is ineffective for small $k$, and there exist examples where the bound is best possible in the case $\be=0$ for all values of $k$, such as the examples discussed around \Cref{generalized Turan supersaturation}(ii).  These two bounds combine to give the intuitive (but not so easy to prove) fact that if $e(\c{H})\gg \ex(n,\c{P})$ then $e(\c{F})\gg 1$ provided no edge of $\c{F}$ is contained in an edge of $\c{H}$, and such a condition is necessary in general to have non-trivial supersaturation results.

\textbf{Organization}. We show how to use these general supersaturation results to derive our applications in \Cref{sec:generalized Turan}.  We then give an easy proof of \Cref{FMS bound} and \Cref{FMS corollary} in \Cref{sec:FMS}.  We prove Theorem~\ref{supersaturation for f large} in \Cref{sec:main proofs}, which is the main technical part of the paper.  We close by discussing some further extensions of our results in \Cref{sec:concluding}.

\textbf{AI Declaration}.  We used Claude's Sonnet 4.8 to conduct literature searches and correct for typos.  All of the mathematics and writing is our own.

\section{Proof of Applications}\label{sec:generalized Turan}
We begin by proving our supersaturation results for subsets of integers assuming our general supersaturation theorems.

\begin{proof}[Proof of \Cref{systems}]
	Recall that we are considering $H,F\sub 2^{\N}$ such that every set in $H$ has at most $h$ elements, every $e\in F$ has $f\le |e|\le f'$, $\ex_\N(n,H,F)$ is non-decreasing in $n$, and $\be<h$ is such that $\ex_\N(n,H,F)=O(n^\be)$. For every set of integers $A$, let $\c{H}_A$ be the hypergraph with $V(\c{H}_A)=A$ where a set $e\sub A$ is an edge of $\c{H}_A$ if and only if $e\in H$.  We similarly define $\c{F}_A$ and let $\c{P}=\{(\c{H}_A,\c{F}_A):A\sub \N\}$.
	
	Our assumptions on $H,F$ imply that each pair in $\c{P}$ is $(h,f,f')$-bounded.  Moreover, we observe that for any set $A\sub \N$ and $S\sub A$ we have $\c{H}_A[S]=\c{H}_{A\cap S}$ and $\c{F}_A[S]=\c{F}_{A\cap S}$, so $\c{P}$ is a hereditary family.  The last ingredient we need is the following
	
	\begin{claim}
		For all $n$ we have $\ex(n,\c{P})\le\ex_\N(n,H,F)$.
	\end{claim}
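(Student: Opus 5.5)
We unwind the definition of $\ex(n,\c{P})$. Fix $n$ and an arbitrary pair in $\c{P}_n$; by definition of $\c{P}$ it has the form $(\c{H}_A,\c{F}_A)$ for some $A\sub\N$ with $|A|=n$. If $\c{P}_n$ is empty, then $\ex(n,\c{P})=0$. The right-hand side is also nonnegative, since $\ex_\N(n,H,F)$ is either a maximum of counts or $0$ by convention, so the claim holds. It therefore suffices to show $\al_{\c{H}_A}(\c{F}_A)\le \ex_\N(n,H,F)$ for each such $A$.

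Let $I\in\c{I}(\c{F}_A)$ be an independent set achieving $\al_{\c{H}_A}(\c{F}_A)=e(\c{H}_A[I])$. Using the observation already made, $\c{H}_A[I]=\c{H}_I$ and $\c{F}_A[I]=\c{F}_I$. Independence of $I$ means $e(\c{F}_I)=0$, that is, no $e\in F$ satisfies $e\sub I$, so $\num_\N(F,I)=0$. Likewise $e(\c{H}_I)$ is exactly the number of $e\in H$ with $e\sub I$, which is $\num_\N(H,I)$.

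Now $|I|=m\le n$. The set $I$ is an $F$-avoiding set of size $m$, so by definition $\num_\N(H,I)\le \ex_\N(m,H,F)$. In particular $\ex_\N(m,H,F)$ is attained by a genuine set rather than being $0$ by convention, although the inequality holds either way. Since $\ex_\N(\cdot,H,F)$ is assumed non-decreasing, $\ex_\N(m,H,F)\le \ex_\N(n,H,F)$. Chaining these gives $\al_{\c{H}_A}(\c{F}_A)\le\ex_\N(n,H,F)$, and maximizing over $A$ yields the claim.

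The only real subtlety is that the extremal independent set may have fewer than $n$ elements. This is exactly why the monotonicity hypothesis on $\ex_\N(n,H,F)$ is needed, and otherwise the argument is bookkeeping. One should also check that the edges of $\c{H}_A$ and $\c{F}_A$ correspond bijectively to elements of $H$ and $F$ contained in $A$. This holds because edges are defined as the subsets $e\sub A$ with $e\in H$, so there is no multiplicity issue.
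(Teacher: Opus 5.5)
Your proposal is correct and follows essentially the same argument as the paper. You take an optimal independent set $I$ of $\c{F}_A$, use $\c{H}_A[I]=\c{H}_I$ and $\c{F}_A[I]=\c{F}_I$ to see that $I$ is an $F$-avoiding set with $\num_\N(H,I)$ sets of $H$, and then use the monotonicity of $\ex_\N(\cdot,H,F)$ to pass from $|I|$ to $n$; the only cosmetic difference is that you bound $\al_{\c{H}_A}(\c{F}_A)$ for every pair in $\c{P}_n$ and then maximize, whereas the paper fixes a pair attaining $\ex(n,\c{P})$ directly.
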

	In fact it is easy to show equality holds, but we will not need this.  For this proof we use that $\num_\N(H,A)=e(\c{H}_A)$ and $\num_\N(F,A)=e(\c{F}_A)$ by definition of these hypergraphs and $\num_\N$.  
	\begin{proof}
		This is true if $\ex(n,\c{P})=0$, and otherwise by definition there exists some $(\c{H}_A,\c{F}_A)\in \c{P}_n$ such that $\ex(n,\c{P})=\al_{\c{H}_A}(\c{F}_A)$, noting that this implies $|A|=|V(\c{H})|=n$.  By definition this means there exists some independent set $S\sub A=V(\c{F}_A)$ of $\c{F}_A$ with \[\ex(n,\c{P})=e(\c{H}_A[S])=e(\c{H}_{A\cap S})=\num_\N(H,A\cap S)\le \ex_\N(|A\cap S|,H,F),\]  
		with this last step using that $\num_\N(F,A\cap S)=0$ since $S$ is an independent set of $\c{F}_A$.  Since $|A\cap S|\le |A|=n$ and we assumed $\ex_\N(n,H,F)$ is non-decreasing in $n$, this implies $ \ex(n,\c{P})\le \ex_\N(n,H,F)$.
	\end{proof}
	This claim and our hypothesis implies $\ex(n,\c{P})=O(n^\be)$.  We thus find that $\c{P}$ satisfies all the hypothesis of \Cref{FMS corollary}, and as such if $k_0$ is as in this corollary, then for any $A\sub \N$ with $|A|=n$ and $\#_\N(H,A)\ge k n^\be$ with $k\ge k_0$ we have
	\[\#_\N(F,A)=e(\c{F}_A)=\Om(k^{\frac{f+h-1-\be}{h-\be}}n^{\be-h+1}).\]
	This gives (i), and one similarly derives (ii) and (iii) by using \Cref{supersaturation for f large}.
\end{proof}

We next prove our results for generalized Tur\'an problems, which requires a bit more work.

\begin{proof}[Proof of \Cref{generalized Turan supersaturation}]
	Let $H$ be an $h$-vertex hypergraph, $F$ an $f$-vertex hypergraph without isolated vertices, and $\be<h$ such that $\ex(n,H,F)=O(n^\be)$.  For every graph $G$, define the $h$-uniform hypergraph $\c{H}_G$ to have vertex set $V(G)$ where a set $S$ of $h$ vertices in $V(G)$ forms a hyperedge of $\c{H}_G$ if and only if $G[S]$ contains a copy of $H$.  Similarly define $\c{F}_G$ to be the $f$-uniform hypergraph on $V(G)$ encoding copies of $F$ and let $\c{P}=\{(\c{H}_G,\c{F}_G):G\textrm{ is a graph}\}$.  
	
	Observe that for a graph $G$ and a set $S\sub V(G)$, we have $\c{H}_G[S]=\c{H}_{G[S]}$ and $\c{F}_G[S]=\c{F}_{G[S]}$.  It follows that $\c{P}$ is a hereditary family of $(h,f,f)$-bounded hypergraph pairs.  Moreover, we have the following.
	\begin{claim}
		For all $n$,
		\[\frac{1}{h!} \ex(n,H,F)\le \ex(n,\c{P})\le \ex(n,H,F).\]
	\end{claim}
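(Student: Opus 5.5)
We prove the two inequalities separately, using that for any graph $G$ the number of hyperedges of $\c{H}_G$ is related to $\num(H,G)$ by a factor of at most $h!$. Specifically, each $h$-set $S$ forming a hyperedge of $\c{H}_G$ contains at least one and at most $h!$ copies of $H$ in $G[S]$; the upper bound holds because a copy of $H$ on vertex set $S$ is determined by a bijection $V(H)\to S$, up to automorphisms. Every copy of $H$ in $G$ lies on exactly one $h$-set, and every $h$-set in $\c{H}_G$ contains at least one copy. Hence
\[\tfrac{1}{h!}\num(H,G)\le e(\c{H}_G)\le \num(H,G).\]
The same reasoning with $F$ shows that $e(\c{F}_G)=0$ if and only if $G$ is $F$-free. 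Here we use that $F$ has no isolated vertices only insofar as $f=v(F)$ and copies of $F$ live on $f$-sets, which is automatic from the definition.

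\textbf{Upper bound.} If $\ex(n,\c{P})=0$ there is nothing to show. Otherwise we choose $G$ on $n$ vertices and an independent set $S$ of $\c{F}_G$ with $\ex(n,\c{P})=e(\c{H}_G[S])=e(\c{H}_{G[S]})$. Since $S$ is independent in $\c{F}_G$, we have $e(\c{F}_{G[S]})=0$, so $G[S]$ is $F$-free. Then
\[e(\c{H}_{G[S]})\le \num(H,G[S])\le \ex(|S|,H,F)\le \ex(n,H,F).\]
The last step uses monotonicity of $\ex(\cdot,H,F)$ in $n$, which holds because we may add isolated vertices to $G[S]$. This addition keeps the graph $F$-free precisely because $F$ has no isolated vertices; this is the one place the hypothesis is genuinely needed, and it is the only subtle point.

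\textbf{Lower bound.} We take an $n$-vertex $F$-free graph $G$ with $\num(H,G)=\ex(n,H,F)$. Then $e(\c{F}_G)=0$, so $V(G)$ itself is an independent set of $\c{F}_G$. Therefore
\[\ex(n,\c{P})\ge \al_{\c{H}_G}(\c{F}_G)\ge e(\c{H}_G)\ge \tfrac{1}{h!}\num(H,G)=\tfrac{1}{h!}\ex(n,H,F).\]

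No step is a serious obstacle. The only points needing care are the counting factor $h!$ and the monotonicity of $\ex(n,H,F)$ in $n$, which relies on $F$ having no isolated vertices.
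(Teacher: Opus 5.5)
Your proof is correct and follows the paper's argument essentially step for step. For the lower bound, you take an extremal $F$-free graph, whose whole vertex set is independent in $\c{F}_G$. For the upper bound, you pass to $G[S]$ for an optimal independent set $S$, use $e(\c{H}_{G[S]})\le \num(H,G[S])$, and then use monotonicity of $\ex(\cdot,H,F)$ via adding isolated vertices, which is exactly where the hypothesis that $F$ has no isolated vertices enters. Your added justification of the $h!$ factor and of the trivial case $\ex(n,\c{P})=0$ does not change the route.
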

	For this and what follows, we make frequent use of the fact that
	\[e(\c{H}_G) \le \num(H,G)\le h! e(\c{H}_G),\]
	since each edge $S$ of $\c{H}_G$ corresponds to at least 1 copy of $H$ in $G[S]$ and trivially at most $h!$ copies in $G[S]$; we similarly use that $e(\c{F}_G)\ge \num(F,G)$.
	\begin{proof}
		For the lower bound on $\ex(n,\c{P})$, let $G$ be an $n$-vertex $F$-free graph with $\ex(n,H,F)$ copies of $H$.  Observe that $e(\c{F}_G)=0$ since $G$ is $F$-free, and hence $V(G)$ is an independent set of $\c{F}_G$, implying that $\al_{\c{H}_G}(\c{F}_G)=e(\c{H}_G)$. It follows that 
		\[\al_{\c{H}_G}(\c{F}_G)=e(\c{H}_G)\ge \frac{1}{h!} \num(H,G)=\frac{1}{h!} \ex(n,H,F),\]
		and since $(\c{H}_G,\c{F}_G)\in \c{P}_n$ because $|V(G)|=n$ this implies the desired lower bound.
		
		For the upper bound, let $G$ be an $n$-vertex graph such that $\alpha_{\c{H}_G}(\c{F}_G)=\ex(n,\c{P})$, which means there exists some $S\sub V(G)$ such that $\c{F}_G[S]$ contains no edges of $\c{F}$ and $e(\c{H}_G[S])=\ex(n,\c{P})$.  That $\c{F}_G[S]$ has no edges is equivalent to saying $G[S]$ is $F$-free, and as such \[\ex(|S|,H,F)\ge \num(H,G[S])\ge e(\c{H}_{G[S]})=e(\c{H}_G[S])=\ex(n,\c{P}).\]
		This implies $\ex(n,\c{P})\le \max_{n'\le n} \ex(n',H,F)$, and because $F$ has no isolated vertices we have $\ex(n',H,F)\le \ex(n,H,F)$ for all $n'\le n$ (simply by taking an extremal $n'$-vertex graph and adding $n-n'$ isolated vertices to it), from which the bound follows.
	\end{proof}
	With this and our hypothesis $\ex(n,H,F)=O(n^\be)$, we see that under the hypothesis of bound (ii) of this theorem that the first part of \Cref{supersaturation for f large} holds for $\c{P}$ with respect to some $k_0$.  As such, if $\num(H,G)\ge k n^\be$ for some $k\ge h!\cdot k_0$, we have $e(\c{H}_G)\ge (h!)^{-1} k n^\be$ and hence by \Cref{supersaturation for f large},
	\[\num(F,G)\ge e(\c{F}_G)=\Omega(k^{\frac{f}{h-\be}}),\]
	giving (ii).
	
	For (i) we wish to apply \Cref{FMS bound}.  We observe for any $n$-vertex graph $G$ that
	\[\ex(n,H,F)\ge \num(H,G)-n^{h-2} \num(F,G)\ge e(\c{H}_G)-f! n^{h-2}e(\c{F}_G),\]
	with the first inequality following simply because we can iteratively delete edges of $G$ which are contained in a copy of $F$ with each deleted edge destroying at most $n^{h-2}$ copies of $H$ in $G$.  From this and the lower bound of the claim it follows that the hypothesis of \Cref{FMS bound} holds for $\c{P}$ with $c=(h!)^{-1}$, $c'=f! (h!)^{-1}$ and $d=h-2$, and the same reasoning as in (ii) gives (i).
	
	For (iii), assume $f<h-\be$ and that $F$ is not isomorphic to a subgraph of $H$.  We ultimately wish to apply the second part of \Cref{supersaturation for f large}, and for this we need to consider a slightly more complex family than $\c{P}$.  To this end, let $V(H)=\{x_1,\ldots,x_h\}$.  Given a graph $G$, we say that a partition $\c{V}=\{V_1,\ldots,V_h\}$ is an $H$-partition of $G$ if having an edge $uv\in E(G)$ with say $u\in V_i$ and $v\in V_j$ implies $x_ix_j\in E(H)$ (in other words, $G$ has an $H$-partition if and only if $G$ is a subgraph of a blowup of $H$).  Given a graph $G$ and $H$-partition $\c{V}$ we let $\c{H}_{G,\c{V}}$ be the hypergraph on $V(G)$ where a set of vertices $\{u_1,\ldots,u_h\}\sub V(G)$ forms a hyperedge if and only if $u_i\in V_i$ for all $i$ and there exists a copy of $H$ in $G$ on $\{u_1,\ldots,u_h\}$ with $u_i$ playing the role of $x_i$ for all $i$. Let $\c{P}'$ be the set of all pairs $(\c{H}_{G,\c{V}},\c{F}_G)$ with $G$ a graph and $\c{V}$ an $H$-partition of $G$.  
	
	 Observe that $\c{P}'$ is hereditary since $\c{H}_{G,\c{V}}[S]=\c{H}_{G[S],\c{V}'}$ with $\c{V}'=\{V_1\cap S,\ldots,V_h\cap S\}$ and also that each hypergraph pair in $\c{P}'$ is $(h,f,f)$-bounded.  Also observe that $\c{H}_{G,\c{V}}\sub \c{H}_G$ for all $G,\c{V}$, so it follows that $\al_{\c{H}_{G,\c{V}}}(\c{F}_G)\le \al_{\c{H}_G}(\c{F}_G)$ and hence $\ex(n,\c{P}')\le \ex(n,\c{P})=O(n^\be)$.
	
	We claim that if $(\c{H}_{G,\c{V}},\c{F}_G)\in \c{P}'$ then no edge  $e\in E(\c{F}_G)$ is contained in any edge $e'\in E(\c{H}_{G,\c{V}})$.  Assume for contradiction that this was false for some $e,e'$.  Note that any $e'\in E(\c{H}_{G,\c{V}})$ intersects each $V_i\in \c{V}$ in exactly 1 vertex, so $e\sub e'$ implies that we must have, say, $e=\{u_1,\ldots,u_f\}$ with $u_i\in V_i$ for all $i$.  By definition of $e\in E(\c{F}_G)$, we have that $G[e]$ contains a copy of $F$.  Because of this, we can write $V(F)=\{y_1,\ldots,y_f\}$ such that for every $y_iy_j\in E(F)$ we have $u_i u_j\in E(G)$, and by definition of $H$-partitions this implies $x_ix_j\in E(H)$.  This in turn then means that $H$ contains a subgraph isomorphic to $F$ on $\{x_1,\ldots,x_f\}$, a contradiction to our assumption that $F$ is not isomorphic to any subgraph of $H$.
	
	With the above we see that $\c{P}'$ satisfies the conditions of the second half of \Cref{supersaturation for f large}.  Now consider any $n$-vertex graph $G$ with $\num(H,G)\ge k n^\be$ for $k$ large.  For a partition $\c{V}=\{V_1,\ldots,V_h\}$, let $G_{\c{V}}$ be the subgraph of $G$ obtained by keeping only the edges of $G$ which lie between parts $V_i,V_j$ with $x_ix_j\in E(H)$, which in particular means $G_{\c{V}}$ has $H$-partition $\c{V}$.  One can show that a random choice of $\c{V}$ satisfies in expectation that $e(\c{H}_{G_{\c{V}},\c{V}})\ge h^{-h}  \num(H,G)\ge h^{-h} k n^\be$, and by choosing a $\c{V}$ which achieves this bound and using \Cref{supersaturation for f large} we find that
	\[\num(F,G)\ge \num(F,G_{\c{V}})\ge e(\c{F}_{G_\c{V}})=\Omega(k^{\frac{2}{h-\be-f+2}}),\]
	proving the result.
\end{proof}
Nearly identical arguments give our result for generalized planar Tur\'an numbers.

\begin{proof}[Sketch of proof of \Cref{generalized planar Turan}]
	We define $\c{H}_G,\c{F}_G$ as in the proof of \Cref{generalized Turan supersaturation}, except now we define $\c{P}$ to consist of all pairs $(\c{H}_G,\c{F}_G)$ where $G$ is a planar graph.  We have that $\c{P}$ is a hereditary family because $\c{H}_{G}[S]=\c{H}_{G[S]}$ and crucially because $G[S]$ is planar whenever $G$ is since planar graphs are hereditary.  From here the exact same details as before go through where for the $h$-partite result we implicitly use that $G_{\c{V}}$ is planar since planar are closed under removing edges.
\end{proof}

Even more generally, for any family of graphs $\c{G}$ one can define a generalized Tur\'an function $\ex_{\c{G}}(n,H,F)$ to be the maximum number of copies of $H$ in an $n$-vertex $F$-free graph of $\c{G}$, and if $\c{G}$ is closed under deleting vertices and edges then these exact same proofs give the same supersaturation results as in \Cref{generalized Turan supersaturation} for $\ex_{\c{G}}(n,H,F)$ in place of $\ex(n,H,F)$.

\section{Proof of \Cref{FMS bound}}\label{sec:FMS}
We begin with our simplest proof which is based on the argument of Ferber, McKinley, and Samotij.  The idea is to look at a $p$-random induced subhypergraph of $(\c{H},\c{F})$ and then perform a deletion argument whose effectiveness is dictated by the parameter $d$.
\begin{proof}[Proof of \Cref{FMS bound}]
	Recall that we wish to show if $\c{P}$ is a hereditary family of $(h,f,f')$-bounded hypergraph pairs with $\ex(n,\c{P})=O(n^\be)$ for some $\be<h$, and if there exist real numbers $c,c'>0$ and $d<h$ such that every $(\c{H},\c{F})\in \c{P}_n$ has $\al_{\c{H}}(\c{F})\ge ce(\c{H})-c'e(\c{F})n^d$, then there exists some $k_0$ such that every $(\c{H},\c{F})\in \c{P}_n$ with $e(\c{H})\ge k n^\be$ for $k\ge k_0$ satisfies
	\[e(\c{F})=\Omega\left( k^{\frac{f+d-\be}{h-\be}}n^{\be-d}\right).\]
	
	Let $k_0,C$ be large constants to be specified later, and assume for contradiction that there exists $(\c{H},\c{F})\in \c{P}_n$ with $e(\c{H})\ge k n^\be$ for $k\ge k_0$ such that \begin{equation}e(\c{F})<C^{-f}\cdot k^{\frac{f+d-\be}{h-\be}}n^{\be-d}.\label{eq:FMS}\end{equation}  Choose a real number $p$ with $Ck^{-\frac{1}{h-\be}}\le p\le 2Ck^{-\frac{1}{h-\be}}$ such that $pn$ is an integer, and we will chose our constants $k_0,C$ such that $p\le 1$ and $pn\ge h$.  Let $V_p\sub V(\c{H})=V(\c{F})$ be a set of $pn$ vertices chosen uniformly at random amongst all subsets of size $pn$.  Consider the induced subhypergraphs $\c{H}_p=\c{H}[V_p]$ and $\c{F}_p=\c{F}[V_p]$.  By using linearity of expectation, $pn\ge h$, and that every edge of $\c{H}$ has size at most $h$ we find
	\[\E[e(\c{H}_p)]=\Omega(p^{h} e(\c{H}))=\Omega(C^{h} k^{\frac{-\be}{h-\be}} n^\be), \]
	and similarly 
	 \eqref{eq:FMS} and that every edge of $\c{F}$ has size at least $f$ implies
	\[\E[e(\c{F}_p)]=O(p^{f} e(\c{F}))=O( k^{\frac{d-\be}{h-\be}}n^{\be-d}).\]
	Because $\c{P}$ is hereditary, we have $(\c{H}_p,\c{F}_p)\in \c{P}_{pn}$, and hence by hypothesis
	\[\ex(pn,\c{P})\ge ce(\c{H}_p)-c'e(\c{F}_p)(pn)^d= ce(\c{H}_p)-c'e(\c{F}_p) (2C)^{d}k^{\frac{-d}{h-\be}}n^{d}.\]
	By taking the expectation of this lower bound we find for $C$ sufficiently large and $d<h$ that
	\[\ex(pn,\c{P})=\Om(C^{h}k^{\frac{\be}{h-\be}} n^{\be}).\]
	On the other hand, we have by assumption that $\ex(pn,\c{P})=O(p^\be n^\be)=O(C^{\be}k^{\frac{\be}{h-\be}} n^{\be})$, and this gives a contradiction for $C$ sufficiently large since $\be<h$, proving the result.
\end{proof} 

We now formally show that the corollary of \Cref{FMS bound} holds.

\begin{proof}[Proof of \Cref{FMS corollary}]
	The result will follow immediately from \Cref{FMS bound} if we can show that every family $\c{P}$ of $(h,f,f')$-bounded hypergraph pairs satisfies $\ex(n,\c{P})\ge e(\c{H})-e(\c{F})n^{h-1}$ for every $(\c{H},\c{F})\in \c{P}_n$.  And indeed, take $I=V(\c{H})=V(\c{F})$ and iteratively remove vertices from $I$ until $e(\c{F}[I])=0$.  We have that $I$ is an independent set of $\c{F}$ by definition, and hence
	\[\ex(n,\c{P})\ge \al_{\c{H}}(\c{F})\ge e(\c{H}[I])\ge e(\c{H})-e(\c{F})n^{h-1},\]
	where this last inequality uses that we trivially removed at most $e(\c{F})$ vertices going from $V(\c{H})$ to $I$ and that every such removal deletes at most $n^{h-1}$ edges from $\c{H}$ since $\c{H}$ being $n$-vertex and having edge sizes at most $h$ implies that any given vertex is in at most $n^{h-1}$ edges of $\c{H}$.
\end{proof}

\section{Proof of \Cref{supersaturation for f large}}\label{sec:main proofs}

In this section we restrict to a partite setting similar to our proof of \Cref{generalized Turan supersaturation}(iii).

\begin{defn}
	For a hypergraph $\c{H}$, we say that sets of vertices $V_1,\ldots,V_{h}$ are an \textit{$h$-partition} of $\c{H}$ if every edge intersects each $V_i$ set in exactly 1 vertex, and we say $\c{H}$ is \textit{$h$-partite} if there exists an $h$-partition of $\c{H}$.
\end{defn}
Note that $h$-partite hypergraphs are automatically $h$-uniform.  We will reduce our problem to partite hypergraphs through the following.

\begin{lem}\label{partite}
	If $\c{H}$ is a hypergraph where every edge $e\in E(\c{H})$ has size at most $h$ and if $e(\c{H})\ge 2$, then there exists a subgraph $\hat{\c{H}}\sub \c{H}$ on the same vertex set of $\c{H}$ which is $\hat{h}$-partite for some $1\le \hat{h}\le h$ and which has $e(\hat{\c{H}})\ge \half h^{-h-1} e(\c{H})$. 
\end{lem}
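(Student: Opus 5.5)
We first pigeonhole on edge sizes, then take a random partition. Edges of $\c{H}$ have sizes in $\{0,1,\ldots,h\}$. The hypothesis $e(\c{H})\ge 2$ handles a possible empty edge: since $\c{H}$ may have repeated edges, at most one edge should be treated as the empty set (or we note that empty edges cannot be partite for $\hat h\ge 1$). So we discard the empty edges, at a cost of at most a factor $\half$ in the edge count. We then choose the size $\hat h\in\{1,\ldots,h\}$ that carries the most remaining edges. This gives a subhypergraph $\c{H}'$ of $\c{H}$ that is $\hat h$-uniform with
\[e(\c{H}')\ge \frac{1}{h}\cdot\bigl(\text{number of nonempty edges}\bigr).\]
If empty edges can occur with multiplicity, this step needs care: we interpret the lemma as applying to nonempty edges, or we note that the $\half$ factor absorbs a single empty edge when $e(\c{H})\ge 2$.

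Next we randomly partition $V(\c{H})$ into $V_1,\ldots,V_{\hat h}$ by assigning each vertex independently and uniformly to one of the $\hat h$ parts. A fixed edge $e$ of $\c{H}'$ meets every part in exactly one vertex precisely when its $\hat h$ vertices receive distinct labels. This happens with probability
\[\frac{\hat h!}{\hat h^{\hat h}}\ge \hat h^{-\hat h}\ge h^{-h}.\]
By linearity of expectation there is a partition under which at least $h^{-h}e(\c{H}')$ edges of $\c{H}'$ are transversal. We let $\hat{\c{H}}$ consist of exactly these edges, on the full vertex set $V(\c{H})$. Then $V_1,\ldots,V_{\hat h}$ is an $\hat h$-partition of $\hat{\c{H}}$, since every edge meets each $V_i$ in exactly one vertex. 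Isolated vertices cause no trouble, because the definition only constrains edges.

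Combining the two steps gives
\[e(\hat{\c{H}})\ge h^{-h}\cdot h^{-1}\cdot\half e(\c{H})=\half h^{-h-1}e(\c{H}),\]
which is the claimed bound. The only delicate point is the bookkeeping for empty edges, which is where the $\half$ and the assumption $e(\c{H})\ge 2$ enter. The random-partition step is the standard Erd\H{o}s argument and is routine.
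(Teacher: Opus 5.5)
Your proposal is correct and follows the paper's proof essentially step for step. You discard the at most one empty edge using $e(\mathcal{H})\ge 2$, pigeonhole on edge size to get an $\hat{h}$-uniform subhypergraph with at least $\frac{1}{2}h^{-1}e(\mathcal{H})$ edges, and then use a uniformly random $\hat{h}$-coloring to keep at least an $\hat{h}^{-\hat{h}}\ge h^{-h}$ fraction of those edges as transversal edges. The paper, like you, simply takes for granted that the empty edge occurs at most once (the statement would fail for two copies of $\emptyset$), so your caveat there is the same implicit assumption the paper makes.
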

\begin{proof}
	For each $0\le \hat{h}\le h$, let $\c{H}_{\hat{h}}\sub \c{H}$ denote the hypergraph on $V(\c{H})$ consisting of the edges of size $\hat{h}$ of $\c{H}$.  Note that $\sum_{\hat{h}=1}^h e(\c{H}_{\hat{h}})\ge e(\c{H})-1\ge \half e(\c{H})$ since $\c{H}_0$ has at most 1 edge and $\c{H}$ has at least 2 edges.  By the pigeonhole principle there is some $1\le \hat{h}\le h$ such that $e(\c{H}_{\hat{h}})\ge \half h^{-1} e(\c{H})$.  By a standard probabilistic argument there exists an $\hat{h}$-partite subgraph $\hat{\c{H}}\sub \c{H}_{\hat{h}}$ with $e(\hat{\c{H}})\ge (\hat{h})^{-\hat{h}} e(\c{H}_{\hat{h}})\ge \half h^{-h-1} e(\c{H})$, proving the result.
\end{proof}
This lemma will ultimately require us to work with $\hat{h}$-partite hypergraphs, but for ease of notation we will consider $h$-partite hypergraphs until needed.  Because our forthcoming proof is a little technical, we will spend a brief moment going over its motivating ideas.

The idea for our proof of \Cref{supersaturation for f large} is similar to  \Cref{FMS bound}: we again take $p$-random induced subhypergrphs $\c{H}_p,\c{F}_p$ and argue that if $e(\c{F})$ is small then $\al_{\c{H}_p}(\c{F}_p)\gg \ex(pn,\c{P})$, a contradiction.  However, we want to do this while avoiding any assumption of the form $\al_{\c{H}}(\c{F})\ge e(\c{H})-e(\c{F})n^d$ to delete the remaining edges of $\c{F}_p$, both because such a bound may not exist, and also because even when such an inequality holds the bounds they give can be rather weak.

To get around this, we will try to choose $p$ small enough so that $\c{F}_p$ has 0 edges with high probability.  To sketch the details further, suppose $\ex(n,\c{P})=O(n^\be)$ and assume for contradiction that there exist $(\c{H},\c{F})\in \c{P}_n$ with $e(\c{H})\ge k n^\be$ and $e(\c{F})\ll k^{\frac{f}{h-\be}}$.  Define $V_p\sub V(\c{H})=V(\c{F})$ by keeping each vertex independently and with probability $p\gg k^{-\frac{1}{h-\be}}$ and let $\c{H}_p,\c{F}_p$ be the subhypergraphs of $\c{H},\c{F}$ induced by $V_p$.  One can show using Markov and Chernoff that $e(\c{F}_p)=0$  and $|V_p|\approx pn$ with high probability.  If we \textit{pretend} also that $e(\c{H}_p)$ is close to $\E[e(\c{H}_p)]\approx p^{h}  kn^\be$, then $\ex(pn,\c{P})\ge \al_{\c{H}_p}(\c{F}_p)=e(\c{H}_p)=\Om(p^{h} kn^\be)$, contradicting $\ex(pn,\c{P})=O(p^\be n^\be)$ for $p\gg k^{\frac{-1}{h-\be}}$.

The place where this proof goes wrong is in our assumption $e(\c{H}_p)\approx \E[e(\c{H}_p)]$ which can fail with high probability.  Indeed, if $\c{H}$ is $h$-partite with $|V_i|\ll p^{-1}$ for some $i$, then with high probability $\c{H}_p$ will contain no vertices of $V_i$ and hence no edges.

The idea for getting around this particular issue is that if $\c{H}$ is $h$-partite then we will try to shrink each of our parts $V_i$ by different factors $q_i$ based on how  ``large'' each part is.  More precisely, we will attempt to take $q_i$ equal to some target value $p$ (such as $p\approx k^{\frac{-1}{h-\be}}$ in our proof sketch), and if it is not possible to shrink $V_i$ by this much then we will instead shrink $V_i$ down to a single vertex and record this lesser shrinking by setting $q_i\approx |V_i|^{-1}$.  Doing this will introduce a number of technical complications, one of which is that the probability of a given edge $e\in E(\c{F})$ surviving in our random subgraph will be a function of how $e$ intersects the various parts $V_i$.  To deal with this we need a definition.

\begin{defn}\label{type definition}
	Let $(\c{H},\c{F})$ be a hypergraph pair such that $\c{H}$ has an $h$-partition $V_1,\ldots,V_{h}$.  For each $e\in E(\c{F})$ we define the  \textit{type} of $e$ to be the tuple $\mathbf{t}=(|e\cap V_1|,\ldots,|e\cap V_{h}|)$.  For a given sequence of non-negative integers $\mathbf{t}=(t_1,\ldots,t_{h})$, we let $e_{\mathbf{t}}(\c{F})$ denote the number of edges of $\c{F}$ which have type $\mathbf{t}$.  If $\c{F}'=\c{F}[S]$ then we define the type of each edge $e\in E(\c{F}')$ with respect to the partition $V_1\cap S,\ldots,V_{h}\cap S$.
\end{defn}

We will make frequent use of the fact that if every edge of $\c{F}$ has size at most $f'$, then $e_{\mathbf{t}}(\c{F})>0$ only holds for $\mathbf{t}$ with $t_i\le f'$ for all $i$, and in particular there exist at most $h^{f'}$ such types.
We now state our technical result for how to try to shrink a part $V_i$ by some target quantity $p$ while maintaining many edges of $\c{H}$, few edges of $\c{F}$, and control over the shrunken size of $V_i$.

\begin{lem}\label{shrinkTechnical}
	Let $(\c{H},\c{F})$ be a hypergraph pair such that $\c{H}$ has an $h$-partition $V_1,\ldots,V_{h}$ and every edge of $\c{F}$ has size at most $f'\ge 1$. For any $i\in \{1,\ldots,h\}$ and real number $p\in [0,1]$, there exists a subset $V_i'\sub V_i$ and a real number $1\ge q\ge p$ such that the following holds for the induced subhypergraphs $\c{H}'=\c{H}[\bigcup_{j\ne i} V_j\cup V_i']$ and $\c{F}'=\c{F}[\bigcup_{j\ne i} V_j\cup V_i']$:
	\begin{itemize}
		\item[(a)] $e(\c{H}')\ge q e(\c{H})$.
		\item[(b)] For each type $\mathbf{t}$, we have \[e_\mathbf{t}(\c{F}')\le (40h)^{f'} q^{t_i} e_\mathbf{t}(\c{F}).\]
		\item[(c)] If $|V'_i|\ge 2$ then $q=p$ and $|V'_i|\le 20 p|V_i|$.
	\end{itemize}
\end{lem}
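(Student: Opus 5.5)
Two regimes, according to how $p|V_i|$ compares to a constant.

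\textbf{Small part.} Suppose $p|V_i|\le 1$, or more generally $|V_i|$ is below a constant multiple of $p^{-1}$. Each edge of $\c{H}$ meets $V_i$ in exactly one vertex, so $\sum_{v\in V_i} d_{\c{H}}(v)=e(\c{H})$. For a type $\mathbf{t}$, the edges of $\c{F}$ of that type satisfy $\sum_{v\in V_i}|\{e:\ v\in e,\ \mathbf{t}(e)=\mathbf{t}\}|=t_i e_\mathbf{t}(\c{F})$. When $t_i=0$ the edge survives regardless, so (b) is trivial there. Set $V_i'=\{v\}$ and $q=|V_i|^{-1}$ (replaced by $\max\{q,p\}$; since $|V_i|\lesssim p^{-1}$ we get $q\ge p$ up to constants, and we should arrange $q\ge p$ exactly by choosing the threshold $|V_i|\le 1/p$). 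Pick $v$ by averaging: choose $v$ uniformly and note that $\E[d_\c{H}(v)]=e(\c{H})/|V_i|$ and $\E[\deg_\mathbf{t}(v)]=t_ie_\mathbf{t}/|V_i|\le f' q e_\mathbf{t}$. We need a single $v$ good for all at most $h^{f'}$ types at once. Use a weighted Markov/averaging trick: pick $v$ with probability proportional to $d_\c{H}(v)$. Then $\E[\deg_\mathbf{t}(v)/d_\c{H}(v)\cdot\ldots]$ gets messy, so instead argue that the set of $v$ with $\deg_\mathbf{t}(v)> 2h^{f'}f'\,e_\mathbf{t}/|V_i|$ for some $\mathbf{t}$ has size less than $|V_i|/2$. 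That alone does not guarantee a high $\c{H}$-degree vertex outside it, which is the real difficulty.

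To handle it, allow $q$ to adapt. Order vertices by $\c{H}$-degree. Take $V_i'$ to be the set $T$ of top-degree vertices of size $m\approx \max(1,p|V_i|)$, giving $e(\c{H}')\ge (m/|V_i|)e(\c{H})$ and $q=m/|V_i|$. But $\c{F}$-degrees on $T$ could be huge. So I would instead use a random subset rather than a deterministic one.

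\textbf{Main argument (random sampling, both regimes).} Let $q_0=\max\{p,1/|V_i|\}$ and take $V_i'$ to be a uniformly random subset of $V_i$ of size $m=\lceil q_0|V_i|\rceil$. Then $\E[e(\c{H}')]=(m/|V_i|)e(\c{H})\ge q_0e(\c{H})$, and $\E[e_\mathbf{t}(\c{F}')]\le (m/|V_i|)^{t_i}e_\mathbf{t}(\c{F})\le (2q_0)^{t_i}e_\mathbf{t}$. The problem is getting both simultaneously, since $e(\c{H}')$ has no concentration. The standard fix is to consider the random variable $X=e(\c{H}')/(q_0e(\c{H}))-\sum_\mathbf{t} e_\mathbf{t}(\c{F}')/(2\cdot h^{f'}(2q_0)^{t_i}e_\mathbf{t}(\c{F}))$, where types with $e_\mathbf{t}=0$ are omitted. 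Then $\E X\ge 1-\tfrac12>0$, so some outcome has $X>0$. At that outcome each $\c{F}$ term is below $e(\c{H}')/(q_0e(\c{H}))$, but that ratio may be large, which is bad. So define $q$ from the outcome: $q:=e(\c{H}')/e(\c{H})\ge$ something. Now (a) holds with equality, and (b) reads $e_\mathbf{t}(\c{F}')\le 2h^{f'}2^{t_i}q_0^{t_i-1}q\,e_\mathbf{t}$. This needs $q_0^{t_i-1}q\lesssim q^{t_i}$, which fails when $q\gg q_0$ and $t_i\ge 2$.

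\textbf{Obstacle.} That last mismatch is the crux. I would fix it with a dyadic/iterative choice. If the sample concentrates $\c{H}$-weight by a factor $\lambda=q/q_0$, then pass to the subset again, or equivalently restrict at the start to the vertices of $V_i$ whose $\c{H}$-degree lies in a dyadic class carrying a $1/\log$ fraction of the weight. Within a near-regular class, $e(\c{H}')$ is essentially proportional to $|V_i'|$, so $q\approx|V_i'|/|V_i|$ and $q^{t_i}$ matches the sampling probability. Losing a $\log$ is not allowed by the constants $(40h)^{f'}$, so instead choose a threshold $D$ and keep only vertices of degree at most $D$, with $D$ chosen so that those vertices carry half the weight. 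The high-degree vertices number at most $e(\c{H})/D$, and if they carry half the weight we recurse on them with a smaller effective $|V_i|$. This gives a geometric recursion where the constants $20$ and $40$ come from summing such halving steps. Finally, if $|V_i'|\ge2$ we have $q=p$, and Markov on $|V_i'|$ (or the fixed size $m\le 2p|V_i|$) gives (c).
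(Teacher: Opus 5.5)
Your proposal has a genuine gap: it does not reach a proof. You abandon the small-part regime at the point you call ``the real difficulty.'' The random-sampling argument fails for the reason you identify. The final threshold/recursion sketch neither fixes a threshold that gives concentration nor resolves the case it recurses into.

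\textbf{The single-vertex case.} The missing observation is structural. Condition (c) forces $q=p$ whenever $|V_i'|\ge 2$, so the only place $q$ may adapt is when $V_i'=\{v\}$ is a single vertex. There, the $q^{t_i}$ mismatch you worry about disappears, because every edge with $t_i\ge 2$ is destroyed and edges with $t_i=0$ are untouched. So take $q=q_v:=\deg_{\c{H}}(v)/e(\c{H})$ rather than $|V_i|^{-1}$. Then (a) holds with equality, (c) is vacuous, and (b) only requires $\deg_{\mathbf{t}}(v)\le (40h)^{f'}q_v\, e_{\mathbf{t}}(\c{F})$ for types with $t_i=1$; here $\deg_{\mathbf{t}}(v)$ counts type-$\mathbf{t}$ edges of $\c{F}$ through $v$. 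For such types $\sum_{v\in V_i}\deg_{\mathbf{t}}(v)=e_{\mathbf{t}}(\c{F})$. Hence the violating vertices have total $q_v$-weight at most $(40h)^{-f'}$ per type, and less than $1/2$ over all relevant types. This is exactly the degree-weighted averaging you set aside as messy. To also get $q\ge p$, pick $v$ from $L=\{v: q_v\ge p\}$; this works whenever $\sum_{v\in L}q_v\ge 1/2$.

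\textbf{The sampling case.} Dispose of $p\ge 1/20$ with $V_i'=V_i$, $q=p$, and suppose $\sigma:=\sum_{v\notin L}q_v\ge 1/2$. Concentration of $e(\c{H}')$ fails only because of heavy vertices. So sample only from $V_i\setminus L$, keeping each vertex independently with probability $10p$. Since every $q_v<p$ there,
\[\Var(e(\c{H}'))\le 10p\, e(\c{H})^2\sum_{v\notin L}q_v^2\le 10p^2\sigma\, e(\c{H})^2, \qquad \E[e(\c{H}')]=10p\sigma\, e(\c{H}).\]
Chebyshev then gives $e(\c{H}')\ge 2p\sigma\, e(\c{H})\ge p\,e(\c{H})$ with probability at least $1-\frac{1}{6.4\sigma}>2/3$. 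Markov for each type (the expectation is at most $(10p)^{t_i}e_{\mathbf{t}}(\c{F})$) plus a union bound gives (b). Since $|V_i\setminus L|\ge (2p)^{-1}$, Chernoff gives (c). All three hold simultaneously with positive probability, with $q=p$.

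\textbf{Where your threshold goes wrong.} The right threshold is $D=p\,e(\c{H})$, tied to $p$. The dichotomy on which side of it carries half the weight finishes the proof with no recursion. Your weight-median threshold fails on both sides. The light side need not concentrate when sampled at rate $\Theta(p)$, since Chebyshev needs $D=O(p\,e(\c{H}))$. Recursing on the heavy side eventually lands in the single-vertex situation you had not resolved. So the assertion that the constants $20$ and $40$ ``come from summing halving steps'' is unsupported.
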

\begin{proof}
	If $p\ge 1/20$ then the result trivially holds for $V'_i=V_i$ and $q=p$, so we may assume from now on that $p\le 1/20$.  
	
	We split our argument into two cases, starting with the case that there exists a small set of vertices in $V_i$ which are in many edges of $\c{H}$.  To this end, for each $v\in V_i$, let $\deg_{\c{H}}(v)$ denote the number of edges in $\c{H}$ which use $v$ and let $q_v=\frac{\deg_{\c{H}}(v)}{e(\c{H})}$, noting that $\sum_v q_v=1$ since $V_1,\ldots,V_{h}$ is an $h$-partition (and hence each edge uses a unique vertex in $V_i$).   Let $L=\{v:q_v\ge p\}$.
	
	First consider the case $\sum_{v\in L} q_v\ge \half$.  In this situation we aim to take $V'_i$ to be a single vertex $v\in L$ and to set $q=q_v\ge p$.  By assumption, any such $V'_i=\{v\}$ will satisfy (a) and (c), so we will complete the proof once we show the following.
	\begin{claim}
		There exists some $v\in L$ such that $V'_i=\{v\}$ satisfies (b) with $q=q_v$.
	\end{claim}
	\begin{proof}
		Observe that for any choice of $V'_i=\{v\}$ we have $e_{\mathbf{t}}(\c{F}')=0$ whenever $t_i\ge 2$ since trivially each edge $e\in E(\c{F}')$ can intersect $V'_i$ in at most one vertex.  Similarly, we always have $e_{\mathbf{t}}(\c{F}')=e_{\mathbf{t}}(\c{F})$ if $t_i=0$, so it remains only to check those $\mathbf{t}$ with $t_i=1$.  For such $\mathbf{t}$, let $\deg_{\mathbf{t}}(v)$ denote the number of edges in $\c{F}$ of type $\mathbf{t}$ which $v$ is contained in and let $B_{\mathbf{t}}$ be the set of $v\in L$ with $\deg_{\mathbf{t}}(v)\ge (40h)^{f'} q_v e_{\mathbf{t}}(\c{F})$.  Because $t_i=1$, each edge counted by $e_{\mathbf{t}}(\c{F})$ contains a unique vertex in $V_i$, which means \[\sum_{v\in B_{\mathbf{t}}} (40h)^{f'} q_v e_\mathbf{t}(\c{F}) \le \sum_{v\in B_{\mathbf{t}}} \deg_{\mathbf{t}}(v) \le e_{\mathbf{t}}(\c{F}),\] or equivalently
		\[\sum_{v\in B_{\mathbf{t}}} q_v\le (40 h)^{-f'}.\]
		By summing this bound over the at most $h^{f'}$ choices of types $\mathbf{t}$ for which $e_{\mathbf{t}}(\c{F})>0$, we find that the sum of $q_v$ over all $v\in \bigcup_{\mathbf{t}} B_{\mathbf{t}}$ is at most $(40)^{-f'}<\half$.  On the other hand, we assumed for this case that $\sum_{v\in L} q_v\ge \half$, so there must exists some $v\in L\sm \bigcup_{\mathbf{t}} B_{\mathbf{t}}$.  We then have that $V'_i=\{v\}$ satisfies (b) with $q=q_v$ by definition of the $B_{\mathbf{t}}$, proving the claim.
	\end{proof}
	This establishes the first case of our argument, so we may assume from now on that $\sum_{v\notin L} q_v\ge \half$.
	
	Let $V'_i\sub V_i$ be the random set obtained by keeping each element of $V_i\sm L$ independently and with probability $10p$.  We aim to show this set with $q=p$ satisfies the conditions of the lemma with positive probability. 
	
	For (c), we make use of a standard form of the Chernoff bound which says that for a binomial random variable $X$ with mean $\mu$ and for any $0<\del<1$ we have \[\Pr[X\ge (1+\del)\mu]\le e^{-\frac{\del^2 \mu}{2+\del}}.\] Observe that $|V'_i|$ is a binomial random variable with $\E[|V'_i|]=10p |V_i\sm L|\le 10 p |V_i|$. Moreover, we must have $|V_i\sm L|\ge \half p^{-1}$, since otherwise by definition of $L$ we would have $\sum_{v\notin L} q_v<p\cdot \half p^{-1}$, a contradiction.  As such,
	\[\Pr\left[|V_i'|\le 20 p |V_i|\right]\ge \Pr\left[|V_i'|\le 2 \E[|V_i'|]\right]\ge 1- e^{-\frac{10 p \cdot \half p^{-1}}{3}}>\frac{2}{3}.\]

	With (b) in mind, for each type $\mathbf{t}$ we observe that $\E[e_{\mathbf{t}}(\c{F}')]=(10p)^{t_i} e_{\mathbf{t}}(\c{F})$ since a given edge of type $\mathbf{t}$ survives in $\c{F}'$ only if each of its $t_i$ vertices survive in $V'_i$.  Using that $t_i\le f'$ for all $\mathbf{t}$ with $e_{\mathbf{t}}(\c{F})>0$  and Markov's inequality gives
	\begin{align*}\Pr[e_{\mathbf{t}}(\c{F}')> (40 h)^{f'} p^{t_i} e_{\mathbf{t}}(\c{F})] &\le \Pr[e_{\mathbf{t}}(\c{F}')> (4h)^{f'} \E[e_{\mathbf{t}}(\c{F}')]\\ &\le  (4h)^{-f'}.\end{align*}
	Since the event that (b) fails is the union of at most $h^{f'}$ such events, we conclude that (b) holds with probability at least $1-4^{-f'}>\frac{2}{3}$.

	It remains to handle (a), for which we use Chebyshev's inequality applied to $e(\c{H}')$.  Observe that \begin{equation}\E[e(\c{H}')]=\sum_{v\notin L} 10 p\cdot  \deg_{\c{H}}(v)=10 p e(\c{H})\sum_{v\notin L} q_v\ge 5 p e(\c{H}),\label{eq:mean}\end{equation}
	with this last step using the hypothesis of the case we are in.  On the other hand, since $e(\c{H}')$ is the sum of independent Bernoulli random variables which take on value $\deg(v)$ with probability $10p$ and since $p\le 1/20$, we have
	\begin{equation}\Var(e(\c{H}'))=\sum_{v\notin L} \deg_{\c{H}}(v)^2\cdot (10p -(10p)^2)\le  \sum_{v\notin L} \deg_{\c{H}}(v)^2\cdot 5 p=5 p e(\c{H})^2\sum_{v\notin L} q_v^2.\label{eq:variance}\end{equation}
	To bound this, we observe an analytic fact.
	\begin{claim}
		If $x_1,x_2\ldots$ and $p$ are real numbers with $0\le x_j\le p\le 1$ for all $j$ and with $\sum_j x_j\le 1$, then $\sum_j x_j^2\le p+p^2$.
	\end{claim}
	\begin{proof}
		If there exist $x_j,x_{j'}$ and real $\ep>0$ with $\ep\le x_j\le x_{j'}\le p-\ep$, then we can replace $x_j,x_{j'}$ with $x_j-\ep,x_{j'}+\ep$ which maintains the hypothesis of the claim while increasing the square sum by $2\ep^2+2\ep (x_{j'}-x_j)>0$.  As such, $\sum x_j^2$ is maximized when as many terms are equal to $p$ as possible and when at most one term lies strictly between $0$ and $p$.  Since $\sum x_j\le 1$, there can be at most $ p^{-1} $ terms with $x_j=p$, and the remaining term has $x_j\le p$.  Thus in total the maximum is at most $p^2\cdot (p^{-1}+1)=p+p^2$ as desired.
	\end{proof}
	Note that $p+p^2\le 1.05 p$ since $p\le 1/20$, and this together with the claim and \eqref{eq:variance} implies
	\[\Var(e(\c{H}'))\le 5.25 p^2 e(\c{H})^2.\]  This inequality, \eqref{eq:mean}, and Chebyshev's inequality combine to give
	\[\Pr[|e(\c{H}')-\E[e(\c{H}')]|\ge .8 \E[e(\c{H}')]]\le \frac{ \Var(e(\c{H}'))}{.64 \E[e(\c{H}')]^2}\le \frac{1.05}{.64\cdot 5}<\frac{1}{3}.\]
	In particular, this and our bound on $\E[e(\c{H}')]$ from \eqref{eq:mean} implies that the probability $e(\c{H}')$ is at least $.2 \E[e(\c{H}')]\ge p e(\c{H})$ is more than $2/3$.
	
	Putting all this together, we see that our random choice of $V'_i$ satisfies each of (a), (b), and (c) individually with probability strictly greater than $2/3$, so with positive probability it satisfies all three conditions, giving the result.
\end{proof}

To implement our proof strategy,  if we are given some $(\c{H},\c{F})$ with $\c{H}$ having $h$-partition $V_1,\ldots,V_{h}$, then we would ideally like to iteratively apply \Cref{shrinkTechnical} to each $V_i$ with the same value $p$, and if \Cref{shrinkTechnical} applies with $q=p$ for all $i$ then this gives our result.  The problem is that if we get, say, $q=1$ for a number of $i$, then our upper bounds for $e_{\mathbf{t}}(\c{F}')$ from \Cref{shrinkTechnical} will be too weak.  

To get around this, we will start by trying to apply \Cref{shrinkTechnical} to shrink each $V_i$ by some initial factor $p$, but if we ever get some $q>p$ from \Cref{shrinkTechnical}, then we will change our goal to shrink each set by some new factor $p'$ with $p'$ a function of $q$.  We will then keep trying to shrink our sets by a factor of $p'$ unless again we get some $q'>p'$ from \Cref{shrinkTechnical}, in which case we change our target value to some $p''$ and so on.  Implementing this procedure with the optimal choices of $p,p',p'',\ldots$ ultimately gives the following technical statement.

\begin{prop}\label{shrinkingApplied}
	Let $(\c{H},\c{F})$ be a hypergraph pair of $n$-vertex hypergraphs such that $\c{H}$ has an $h$-partition $V_1,\ldots,V_{h}$ and every edge of $\c{F}$ has size at most $f'$, and let $k,C,\be$ be non-negative reals with $e(\c{H})\ge k n^\be$ and $k\ge C\ge (30)^{4h^2}$.  For all $1\le i\le h$, there exist subsets $V_i'\sub V_i$ and reals $0\le p_i\le 1$ such that the following hold with respect to $\c{H}'=\c{H}[\bigcup_{i=1}^h V_i']$ and $\c{F}'=\c{F}[\bigcup_{i=1}^h V_i']$:
	\begin{itemize}
		\item[(a)] $e(\c{H}')\ge  \prod_i p_i e(\c{H})$.
		\item[(b)] For every type $\mathbf{t}$ we have $e_{\mathbf{t}}(\c{F}')\le (40 h)^{4h^2f'} \prod_i p_i^{t_i} e_{\mathbf{t}}(\c{F})$.
		\item[(c)] For all $i$, either $|V'_i|=1$ or $|V'_i|\le (20)^{4h^2} p_i n$.
		\item[(d)] The number of $i$ with $|V_i'|=1$ is some number $0\le s< \lceil h-\be\rceil $.
		\item[(e)] If $|V_i'|\ge  2$, then 
		\[p_i=p:=\left[C^{-1} \prod_{i':|V_{i'}'|=1} p_{i'} k \right]^{\frac{-1}{h-\be-s}}.\]
		Moreover, we have $n^{-1}\le p\le (C^{-1} k)^{\frac{-1}{h-\be}}$ and $p_i\ge p$ whenever $|V_i'|=1$.
	\end{itemize}
\end{prop}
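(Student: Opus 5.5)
We will prove this by an iterative procedure built on \Cref{shrinkTechnical}, organized in rounds. Throughout we maintain a set $T\sub\{1,\ldots,h\}$ of indices that have been shrunk to a single vertex, together with their values $p_{i'}\ge$ (the target at the time they were fixed). We set $s=|T|$ and define the current target
\[p_T=\left[C^{-1}\prod_{i'\in T}p_{i'}\,k\right]^{\frac{-1}{h-\be-s}}.\]
A round starts from the original parts for indices outside $T$ and the fixed singletons for indices in $T$. Within the round, we apply \Cref{shrinkTechnical} successively to each $i\notin T$ with $p=p_T$.

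If every application returns $q=p_T$, the round ends, and we set $p_i=p_T$ for all $i\notin T$. If some application returns $q>p_T$, then by (c) of \Cref{shrinkTechnical} we have $|V_i'|\le 1$. We may assume $|V_i'|=1$: if $V_i'$ were empty we would have $e(\c{H}')=0$, which contradicts (a), at least once we know $e(\c{H})>0$. In that case we add $i$ to $T$ with $p_i=q$ and restart with a new round. Because $T$ only grows, there are at most $h$ rounds.

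The key point is to restart from the original partition, which keeps the error constants under control. In each round at most $h$ applications of \Cref{shrinkTechnical} are made, giving a factor of at most $(40h)^{hf'}$ in (b). Across at most $h$ rounds this totals $(40h)^{h^2f'}$, well within $(40h)^{4h^2f'}$. Likewise, (c) gets a factor $20$, so the constants in the statement are generous.

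There is a subtlety in restarting. Singletons chosen in earlier rounds were selected relative to the hypergraph at that moment. To keep (a) and (b) multiplicative, we carry the singleton restrictions forward instead of recomputing them. At the start of each round, we apply \Cref{shrinkTechnical} only to the indices not in $T$, working inside the induced pair $(\c{H}[\,\cdot\,],\c{F}[\,\cdot\,])$ already restricted to the singletons in $T$. Since each application multiplies $e(\c{H})$ by at least $q$ and each $e_{\mathbf{t}}(\c{F})$ by at most $(40h)^{f'}q^{t_i}$, telescoping gives (a) and (b) with $\prod_i p_i$.

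Next we verify (e). By induction, whenever an index $i$ is added to $T$ we have $p_i=q\ge p_T$. We must also show that the target is monotone: after adding $i$ with $p_i\ge p_T$, the new target satisfies $p_{T\cup\{i\}}\ge p_T$. Writing $P=C^{-1}\prod_{T}p_{i'}k$, the old target is $p_T=P^{-1/(h-\be-s)}$. The new target is $(Pp_i)^{-1/(h-\be-s-1)}$, and
\[(Pp_i)^{-1/(h-\be-s-1)}\ \ge\ (PP^{-1/(h-\be-s)})^{-1/(h-\be-s-1)}=p_T.\]
Hence all earlier singleton values remain at least the final $p$. The same monotonicity gives $p\le p_\emptyset=(C^{-1}k)^{-1/(h-\be)}$.

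We expect the main obstacle to be (d) together with the lower bound $p\ge n^{-1}$. Here we need that the exponent $h-\be-s$ stays positive, and that once $s$ is large the process would already contradict the hypotheses. The plan is to use $p_{i'}\le 1$, which gives $\prod_T p_{i'}\ge p^{s}$. Then
\[p^{h-\be-s}=C\Big(k\prod_T p_{i'}\Big)^{-1}\le Ck^{-1}p^{-s},\]
so $p^{h-\be}\le C/k\le 1$, which is consistent. For the bound $p\ge n^{-1}$, we use $e(\c{H}')\ge \prod_i p_i\, e(\c{H})$ together with the trivial bound $e(\c{H}')\le\prod_{i\notin T}|V_i'|\le(20^{4h^2}p_T n)^{h-s}$. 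Combining these gives an inequality forcing $p_T\gtrsim n^{-1}$, using $e(\c{H})\ge kn^{\be}$ and $C\ge 30^{4h^2}$.

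The same inequality should rule out $s\ge h-\be$. If $s\ge\lceil h-\be\rceil$, then $\prod_T p_{i'}\cdot k\,n^{\be}\le e(\c{H}')\le(\text{const}\cdot p n)^{h-s}$ would contradict the choice of $C$. So we stop, or refuse to add an index, once $s=\lceil h-\be\rceil-1$: at that point a $q>p_T$ return can be handled by simply accepting $q$ as a bound, but this case needs the most care and we would check it first.
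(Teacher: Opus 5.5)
There is a genuine gap in the restart step. When \Cref{shrinkTechnical} returns a singleton $\{v_i\}$ partway through a round, its guarantees are relative to the pair $(\c{H}_r,\c{F}_r)$. In that pair the $r$ indices already processed in the round have been shrunk. You know $\deg_{\c{H}_r}(v_i)\ge q\,e(\c{H}_r)$, and you know $v_i$ avoids the bad sets $B_{\mathbf{t}}$ defined through type-degrees in $\c{F}_r$.

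When you restart from the original parts, those $r$ shrinkings are undone, and nothing ties $v_i$ to the unshrunk pair $(\c{H}_0,\c{F}_0)$. The most you can say is $\deg_{\c{H}_0}(v_i)\ge \deg_{\c{H}_r}(v_i)\ge q\,p_T^{\,r}e(\c{H}_0)$, not $q\,e(\c{H}_0)$. Also, $v_i$ may lie in far more than $(40h)^{f'}q\,e_{\mathbf{t}}(\c{F}_0)$ edges of type $\mathbf{t}$ in $\c{F}_0$. So the undo step is not an application of the lemma, the multiplicative chain breaks there, and the ``telescoping'' for (a) and (b) is unjustified. Redefining $p_i$ as the true ratio $\deg_{\c{H}_0}(v_i)/e(\c{H}_0)$ would rescue (a), but it could put $p_i$ below the target, violating (e), and it still would not give (b). Carrying the singleton forward does not fix this, because the problem is the context in which $v_i$ was chosen.

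The paper never restarts; it keeps the already-shrunk parts. Your monotonicity inequality is reversed. The exponent $-1/(h-\be-s-1)$ is negative, so $p_i\ge p_T$ gives $p_{T\cup\{i\}}\le p_T$. This decreasing direction is what actually yields ``earlier singletons are at least the final $p$'' and $p\le (C^{-1}k)^{-1/(h-\be)}$. Consequently, after a singleton is created, parts previously shrunk to the old target sit above the new target and must be shrunk again, with relative parameter (new target)$/p_i$.

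Termination comes from a potential. Each application either raises $s$, or, with $s$ fixed, raises the number of indices whose $p_i$ equals the current target. So there are at most $(h+1)^2\le 4h^2$ applications. This compounding is exactly where the exponents $4h^2$ in (b) and (c) come from, so those constants are not slack.

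Finally, for (d) you cannot ``refuse to add an index''; you must show the lemma never returns a singleton that would make $s\ge h-\be$. The paper does this at every step. Write $V_{i'}^{(j)},p_{i'}^{(j)}$ for the state after $j$ applications and $p^{(j-1)}$ for the target before the $j$-th. Bound $e(\c{H}^{(j)})\le \prod_{i'}|V_{i'}^{(j)}|$, using $|V_{i'}^{(j)}|\le 20^j p_{i'}^{(j)}n$ for non-singletons, and divide out these $p_{i'}^{(j)}$. This gives $p_i^{(j)}\le C^{-1}20^{j(h-s)}p^{(j-1)}(p^{(j-1)}n)^{h-\be-s}$. Since $p^{(j-1)}n\ge 1$, this forces $q<p$ whenever $s\ge h-\be$, contradicting the lemma. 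That is why $p^{(j)}\ge n^{-1}$ has to be maintained throughout the process, not just checked at the end.
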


Note that the exponent in the definition of $p$ in (e) is negative and well-defined by (d). We also note that the hypothesis of the lemma implies $\be<h$, since $n$-vertex $h$-partite hypergraphs trivially have $e(\c{H})\le n^h$ and we assumed $e(\c{H}) \ge kn^\be$.

\begin{proof}
	Our goal will be to iteratively construct sets $V_1^{(j)},\ldots,V_{h}^{(j)}$ and reals $p_1^{(j)},\ldots,p_{h}^{(j)}$ which satisfy the following for all $j$ with respect to $\c{H}^{(j)}=\c{H}[\bigcup_i V_i^{(j)}]$ and $\c{F}^{(j)}=\c{F}[\bigcup_i V_i^{(j)}]$:
	\begin{itemize}
		\item[(a')] $e(\c{H}^{(j)})\ge \prod_i p_i^{(j)} e(\c{H})$.
		\item[(b')] For every type $\mathbf{t}$ we have $e_{\mathbf{t}}(\c{F}^{(j)})\le (40 h)^{jf'} \prod_i (p_i^{(j)})^{t_i} e_{\mathbf{t}}(\c{F})$.
		\item[(c')] For all $i$, either $|V_i^{(j)}|=1$ or $|V_i^{(j)}|\le (20)^{j} p_i^{(j)} n$.
		\item[(d')] The number of $i$ with $|V_i^{(j)}|=1$ is some number $0\le s^{(j)}<\lceil h-\be\rceil $.
		\item[(e')] For all $i$ we have 
		\[p_i^{(j)}\ge p^{(j)}:=\left[C^{-1} \prod_{i':|V_{i'}^{(j)}|=1} p_{i'}^{(j)} k \right]^{\frac{-1}{h-\be-s^{(j)}}},\]
		and moreover we have $n^{-1}\le p^{(j)}\le (C^{-1} k)^{-\frac{1}{h-\be}}$.
		\item[(f')] If $j\ge 1$ then $s^{(j)}\ge s^{(j-1)}$, and  if $s^{(j)}=s^{(j-1)}$ then
		\[|\{i: p_i^{(j)}=p^{(j)}\}|>|\{i:p_i^{(j-1)}=p^{(j-1)}\}|.\]
	\end{itemize}
	Observe that if we can simultaneously satisfy conditions (a') through (e') and additionally that $p_i^{(j)}=p^{(j)}$ whenever $|V_i^{(j)}|\ne 1$ for some $j\le 4h^2$, then the sets $V_i^{(j)}$ and reals $p_i^{(j)}$ will give the result.  The condition (f') will be used only to ensure that $j\le 4h^2$ at the end of this process.
	
	To begin, we set $V_i^{(0)}=V_i$ and $p_i^{(0)}=1$ for all $i$.
	\begin{claim}\label{start of procedure}
		The sets $V_i^{(0)}$ and reals $p_i^{(0)}$ satisfy conditions (a') through (f').
	\end{claim}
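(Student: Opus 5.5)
The claim is a direct verification from the definitions at $j=0$; the only conditions with content are (d') and (e'), and both follow from the trivial upper bound on $e(\c{H})$ obtained by counting edges through the parts.

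\textbf{Conditions (a'), (b'), (c'), (f').} With $V_i^{(0)}=V_i$ we have $\c{H}^{(0)}=\c{H}$ and $\c{F}^{(0)}=\c{F}$. Since $p_i^{(0)}=1$ for all $i$, condition (a') reads $e(\c{H})\ge e(\c{H})$. Condition (b') reads $e_{\mathbf{t}}(\c{F})\le (40h)^0 e_{\mathbf{t}}(\c{F})$. For (c'), either $|V_i|=1$ or $|V_i|\le n=(20)^0\cdot 1\cdot n$. Condition (f') is vacuous since $j=0$.

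\textbf{The key bound.} Let $s=s^{(0)}$ be the number of $i$ with $|V_i|=1$. Every edge of $\c{H}$ picks exactly one vertex from each part, so
\[kn^\be\le e(\c{H})\le \prod_i |V_i|\le n^{h-s},\]
because each singleton part contributes a factor $1$ and every other part has size at most $n$. Note that $n\ge 2$: otherwise $e(\c{H})\le 1<k$, using $k\ge C>1$. Hence
\[k\le n^{h-\be-s}.\]

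\textbf{Condition (d').} Since $k>1$ and $n\ge 2$, the bound above forces $h-\be-s>0$, that is, $s<h-\be$. As $s$ is an integer, this gives $s\le \lceil h-\be\rceil-1$, which is (d'). In particular the exponent $\frac{-1}{h-\be-s}$ is negative and well defined.

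\textbf{Condition (e').} The product over singleton parts consists only of factors $p_{i'}^{(0)}=1$, so
\[p^{(0)}=(C^{-1}k)^{\frac{-1}{h-\be-s}}.\]
Since $k\ge C$, the base satisfies $C^{-1}k\ge 1$, and with the negative exponent we get $p^{(0)}\le 1=p_i^{(0)}$ for every $i$. For the upper bound on $p^{(0)}$, we have $0<h-\be-s\le h-\be$ and a base at least $1$, so
\[(C^{-1}k)^{\frac{-1}{h-\be-s}}\le (C^{-1}k)^{\frac{-1}{h-\be}}.\]
For the lower bound, combine $C^{-1}k\le k$ (as $C\ge 1$) with the key bound to get $C^{-1}k\le n^{h-\be-s}$. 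Raising both sides to the negative power $\frac{-1}{h-\be-s}$ yields $p^{(0)}\ge n^{-1}$.

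The only step requiring any care is (d'): it needs the strict inequality $k>1$ together with the edge-count bound through the singleton parts. The same inequality $k\le n^{h-\be-s}$ also supplies the lower bound $p^{(0)}\ge n^{-1}$ in (e').
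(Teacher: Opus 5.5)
Your proposal is correct and follows essentially the same route as the paper. Both arguments use $kn^{\beta}\le e(\mathcal{H})\le\prod_i|V_i|\le n^{h-s}$ to obtain $k\le n^{h-\beta-s}$, which then gives (d') and both sides of (e'). Two small differences: your observation that $n\ge 2$ is unnecessary, since $n\ge 1$ already gives $n^{h-\beta-s}\le 1$ when the exponent is nonpositive; and you explicitly check $p_i^{(0)}=1\ge p^{(0)}$, which the paper leaves implicit.
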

	\begin{proof}
		Conditions (a'), (b'), (c'), and (f') are trivially satisfied.  For ease of notation for the remaining conditions (d') and (e'), we let $s=s^{(0)}$ be the number of $i$ with $|V_i|=1$.  Since each edge of $\c{H}$ can be defined by picking a vertex of $V_i$, we trivially have
		\[k n^\be \le  e(\c{H})\le \prod_i |V_i|\le  n^{h-s},\]
		and hence
		\begin{equation}
			k\le n^{h-\be-s} \label{eq:kboundStart}.
		\end{equation}
		If $s\ge \lceil h-\be \rceil\ge h-\be$, then \eqref{eq:kboundStart} implies $k\le 1$, a contradiction to our assumption on $k$.  We conclude that $s< \lceil h-\be \rceil$, so (d') is satisfied.  Moving to (e'), we see that this condition is equivalent to having
		\[n^{-1}\le (C^{-1} k)^{\frac{-1}{h-\be-s}}\le (C^{-1} k)^{\frac{-1}{h-\be}}.\]
		The upper bound follows from $s\le \lceil h-\be\rceil-1<h-\be$ and $k\ge C$.  The lower bound follows from \eqref{eq:kboundStart} and $C\ge 1$.  We conclude that all of the conditions are satisfied.
	\end{proof}
	
	Now assume we have constructed sets $V_i^{(j-1)}$ and reals $p_i^{(j-1)}$ satisfying (a') through (f') for some $j\ge 1$.  If $p_i^{(j-1)}=p^{(j-1)}$ for all $i$ with $|V_i^{(j-1)}|\ge 2$ then we stop the process.  Otherwise, by (e') there must exist some $i$ with $p_i^{(j-1)}>p^{(j-1)}$ and with $|V_i^{(j-1)}|\ge 2$, and we fix such an $i$ arbitrarily.  For all $i'\ne i$, we set $V_{i'}^{(j)}=V_{i'}^{(j-1)}$ and $p_{i'}^{(j)}=p_{i'}^{(j-1)}$.  For $i'=i$, we apply \Cref{shrinkTechnical} to the pair $(\c{H}^{(j-1)},\c{F}^{(j-1)})$ with $p:=p^{(j-1)}/p_i^{(j-1)}$ and let $V_i^{(j)}\sub V_i^{(j-1)}$ denote the set guaranteed by this lemma and we let $p_i^{(j)}=p_i^{(j-1)}\cdot q$ where $q$ is the real number guaranteed from the lemma.
	\begin{claim}
		The sets $V_i^{(j)}$ and reals $p_i^{(j)}$ satisfy conditions (a') through (f') provided $j\le 4h^2$.
	\end{claim}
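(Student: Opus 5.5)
The plan is to verify (a') through (f') directly for the step just described. Let $i$ be the chosen index and $\tilde p:=p^{(j-1)}/p_i^{(j-1)}$. This lies in $[0,1]$ because (e') gives $p_i^{(j-1)}\ge p^{(j-1)}$. The sets $V_{i'}^{(j-1)}$ form an $h$-partition of $\c{H}^{(j-1)}$, since every edge of an induced subhypergraph is an edge of $\c{H}$, so \Cref{shrinkTechnical} applies. Types of edges of $\c{F}^{(j)}\sub\c{F}^{(j-1)}\sub\c{F}$ are the same whichever of these partitions we compute them with, because every part only shrinks and the edge stays inside the surviving vertex sets.

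\emph{(a') and (b').} These are immediate from \Cref{shrinkTechnical}(a),(b) and induction. We have $e(\c{H}^{(j)})\ge q\,e(\c{H}^{(j-1)})\ge q\prod_{i'}p_{i'}^{(j-1)}e(\c{H})=\prod_{i'}p^{(j)}_{i'}e(\c{H})$. Similarly $e_{\mathbf t}(\c{F}^{(j)})\le(40h)^{f'}q^{t_i}e_{\mathbf t}(\c{F}^{(j-1)})$, and combining this with (b') at step $j-1$ gives the factor $(40h)^{jf'}\prod_{i'}(p^{(j)}_{i'})^{t_{i'}}$. Note also that $e(\c{H}^{(j)})>0$, so no part becomes empty.

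\emph{(c').} Only part $i$ changes. If $|V_i^{(j)}|\ge2$, then \Cref{shrinkTechnical}(c) gives $q=\tilde p$ and $|V_i^{(j)}|\le 20\tilde p|V_i^{(j-1)}|$. Since $|V_i^{(j-1)}|\ge 2$, (c') at step $j-1$ bounds this by $20^j\tilde p\,p_i^{(j-1)}n=20^jp_i^{(j)}n$. For the later counting step I would actually carry the slightly stronger invariant $|V_{i'}^{(j)}|\le 20^{j_{i'}}p_{i'}^{(j)}n$, where $j_{i'}$ is the number of times part $i'$ has been shrunk, so $\sum_{i'}j_{i'}\le j$. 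This is proved identically and implies (c').

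\emph{Case 1: $|V_i^{(j)}|\ge2$.} The set of singleton parts is unchanged, so $s^{(j)}=s^{(j-1)}$ and $p^{(j)}=p^{(j-1)}$. Hence (d') and (e') are inherited, using $p_i^{(j)}=p^{(j-1)}=p^{(j)}$. For (f'), part $i$ had $p_i^{(j-1)}>p^{(j-1)}$ and now has $p_i^{(j)}=p^{(j)}$, while no other part changes, so the count in (f') strictly increases.

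\emph{Case 2: $|V_i^{(j)}|=1$.} Here $s^{(j)}=s^{(j-1)}+1=:s$, so the first clause of (f') holds. The key step, and the one I expect to be the main obstacle, is the counting inequality. Every edge of $\c{H}^{(j)}$ picks one vertex from each part, and we use (a') together with the strengthened (c'):
\[\prod_{i'}p_{i'}^{(j)}\,k n^\be\le e(\c{H}^{(j)})\le\prod_{i':|V_{i'}^{(j)}|\ge2}20^{j_{i'}}p^{(j)}_{i'}n .\]
Cancelling the non-singleton $p_{i'}$ gives
\[k\prod_{\text{sing}}p_{i'}^{(j)}\le 20^{j}n^{h-\be-s}.\]
On the other side, the definition of $p^{(j-1)}$ gives
\[k\prod_{\text{sing}}p^{(j)}_{i'}=C\,(p^{(j-1)})^{-(h-\be-s^{(j-1)})}\,p_i^{(j)}\ge C\,(p^{(j-1)})^{-(h-\be-s)},\]
using $p_i^{(j)}=p_i^{(j-1)}q\ge p^{(j-1)}$. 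Now suppose $s\ge h-\be$. Then the exponent $-(h-\be-s)$ is nonnegative and $p^{(j-1)}\le1$, so the left side is at least $C$, while $n^{h-\be-s}\le1$. This gives $C\le20^{j}\le 20^{4h^2}<30^{4h^2}$, a contradiction. Hence $s<\lceil h-\be\rceil$, which is (d'). The strengthened (c') is needed here: the naive bound from (c') would give $20^{jh}$, which is too large compared with $C$.

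\emph{(e') in Case 2.} The same identity gives $(p^{(j)})^{-(h-\be-s)}\ge (p^{(j-1)})^{-(h-\be-s)}$, and the exponent $h-\be-s$ is positive by (d'), so $p^{(j)}\le p^{(j-1)}$. Therefore every $p^{(j)}_{i'}\ge p^{(j-1)}\ge p^{(j)}$, and the upper bound $p^{(j)}\le(C^{-1}k)^{-1/(h-\be)}$ is inherited. For the lower bound, the counting inequality gives $(p^{(j)})^{-(h-\be-s)}\le C^{-1}20^{j}n^{h-\be-s}\le n^{h-\be-s}$, using $C\ge 20^{4h^2}\ge 20^j$. This yields $p^{(j)}\ge n^{-1}$, completing the verification.
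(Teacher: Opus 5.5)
There is a genuine gap in your proof of (d') in Case 2. You derive $k\prod_{\mathrm{sing}}p_{i'}^{(j)}\ge C\,(p^{(j-1)})^{-(h-\be-s)}$, assume $s\ge h-\be$, and then conclude from $p^{(j-1)}\le 1$ that the right side is at least $C$. The inequality goes the other way. The exponent $-(h-\be-s)=s-h+\be$ is nonnegative, so $(p^{(j-1)})^{s-h+\be}\le 1$, and you only learn that your lower bound is at most $C$, from which nothing follows. In fact $(p^{(j-1)})^{s-h+\be}$ can be as small as $n^{-(s-h+\be)}=n^{h-\be-s}$. That is exactly the factor you discarded on the other side via $n^{h-\be-s}\le 1$, so the two estimates cannot be decoupled.

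The repair is to use the lower bound $p^{(j-1)}\ge n^{-1}$ from (e') at step $j-1$ instead of the trivial upper bound. Keeping your two inequalities together gives $C\,(p^{(j-1)}n)^{s-h+\be}\le 20^{j}$. Since $p^{(j-1)}n\ge 1$ and $s-h+\be\ge 0$, we have $(p^{(j-1)}n)^{s-h+\be}\ge 1$, hence $C\le 20^{j}$, which is the contradiction you wanted. This is what the paper does; it phrases it as forcing $q<p^{(j-1)}/p_i^{(j-1)}$, contradicting \Cref{shrinkTechnical}. With this change the rest of your verification is correct and follows the paper's case analysis: the same counting inequality, the same proof that $p^{(j)}\le p^{(j-1)}$, and the same proof that $p^{(j)}\ge n^{-1}$.

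Your refined invariant $|V_{i'}^{(j)}|\le 20^{j_{i'}}p_{i'}^{(j)}n$ with $\sum_{i'}j_{i'}\le j$ is a genuine improvement over the paper. The paper's cruder (c') produces the factor $20^{j(h-s)}$ and then invokes $C>20^{4h^3}$, which the stated hypothesis $C\ge 30^{4h^2}$ does not supply. Your bookkeeping needs only $C>20^{4h^2}$.
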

	\begin{proof}
		For (a'), we have by \Cref{shrinkTechnical}(a) and the fact that the $V_i^{(j-1)}$ sets and reals $p_i^{(j-1)}$ satisfied (a') that
		\begin{align*}e(\c{H}^{(j)})\ge q e(\c{H}^{(j-1)})&\ge q \prod_{i'} p_{i'}^{(j-1)} e(\c{H})\\ &= qp_i^{(j-1)}\prod_{i'\ne i} p_{i'}^{(j)} e(\c{H})=\prod_{i'} p_{i'}^{(j)} e(\c{H}).\end{align*}
		This proves (a'), and essentially identical arguments give (b') and (c') as well. For the remaining conditions we let \[s:=s^{(j)}.\]  We split our argument into two cases depending on the size of $V_i^{(j)}$.  First assume $|V_i^{(j)}|\ge 2$.  Since $V_{i'}^{(j)}=V_{i'}^{(j-1)}$ for $i'\ne i$, this means $s^{(j)}=s^{(j-1)}=s$, so (d') holds for $j$ since it held for $j-1$.  Similarly since $p_{i'}^{(j)}=p_{i'}^{(j-1)}$ for $i'\ne i$ and in particular for all $i'$ with $|V_{i'}^{(j)}|=1$, we have that $p^{(j)}=p^{(j-1)}$ by definition of this real number.  Since (e') held for $j-1$, we continue to have $n^{-1}\le p^{(j)}\le 1$ and $p_{i'}^{(j)}=p_{i'}^{(j-1)}\ge p^{(j)}$ for all $i'\ne i$.  For $i'=i$, because we are assuming the set $V_i^{(j)}$ given by \Cref{shrinkTechnical} has size at least 2, \Cref{shrinkTechnical}(c) guarantees $q=p=p^{(j-1)}/p_i^{(j-1)}$, and hence $p_i^{(j)}=p_i^{(j-1)} q=p^{(j-1)}=p^{(j)}$.  Thus (e') is satisfied, and moreover this equality $p_i^{(j)}=p^{(j)}$ and the assumption $p_i^{(j-1)}>p^{(j)}$ by our choice of $i$ implies (f') holds.  This establishes the result when $|V_i^{(j)}|\ge 2$.
		
		From now on we assume $|V_i^{(j)}|=1$, which means $s^{(j)}=s=s^{(j-1)}+1$ and hence (f') is trivially satisfied.  As in our proof of \Cref{start of procedure}, we note that because edges of $\c{H}^{(j)}$ can be identified by selecting a vertex from each $V_{i'}^{(j)}$ set, we have by us having  proven (a') and (c') hold for $j$ that
		\begin{equation}  \prod_{i'} p_{i'}^{(j)} \cdot k n^\be\le  e(\c{H}^{(j)}) \le \prod_{i'} |V_{i'}^{(j)}|\le \prod_{i': |V_{i'}^{(j)}|\ge 2} (20)^{j}p_{i'}^{(j)} n.\label{eq:lastStep}\end{equation}
		Dividing both sides by the $h-s$ common factors of $p_{i'}^{(j)}$ gives
		\[(20)^{j(h-s)} n^{h-s}\ge \prod_{i':|V_{i'}^{(j)}|=1} p_{i'}^{(j)}\cdot k n^\be= p_i^{(j)} \prod_{i' :|V_{i'}^{(j-1)}|=1} p_{i'}^{(j-1)}\cdot k n^\be =p_i^{(j)} C (p^{(j-1)})^{-h+\be+s-1} n^\be,\]
		with this last step using the definition of $p^{(j-1)}$ and that $s^{(j-1)}=s-1$.  Rearranging this further together with the definition of $p_i^{(j)}$ gives
		\[p_i^{(j-1)} q= p_i^{(j)}\le C^{-1} (20)^{j(h-s)} p^{(j-1)} (p^{(j-1)} n)^{h-\be-s}<  p^{(j-1)}(p^{(j-1)} n)^{h-\be-s},\]
		where this last inequality used $j\le 4h^2$ and $C>(20)^{4h^3}$.  Note that if $s\ge h-\be$ then this together with $p^{(j-1)}\ge n^{-1}$ from (e') for $j-1$ implies $q< p^{(j-1)}/p_i^{(j-1)}=p$, a contradiction to \Cref{shrinkTechnical}.  We thus must have $s<h-\be\le \lceil h-\be\rceil$, proving (d').
		
		For (e'), we first claim that $p^{(j)}\le p^{(j-1)}$.  Indeed, because $|V_i^{(j)}|=1$ the real number $p_i^{(j)}$ appears in the definition of $p^{(j)}$ with a negative exponent since by (d') we have $h-\be-s>0$.  As such, $p^{(j)}$ is as large as possible when $p_i^{(j)}$ is as small as possible, and for this we observe \[p_i^{(j)}=p_i^{(j-1)} q\ge p^{(j-1)}\] since $q\ge p=p^{(j-1)}/p_i^{(j-1)}$.  At this extremal value of $p_i^{(j)}=p^{(j-1)}$, the definition of $p^{(j)}$ implies
		\[(p^{(j)})^{-h+\be+s}=C^{-1} \prod_{i':|V_{i'}^{(j)}|=1} p_{i'}^{(j)} k=C^{-1} p^{(j-1)} \prod_{i':|V_{i'}^{(j-1)}|=1} p_{i'}^{(j-1)} k=p^{(j-1)}\cdot (p^{(j-1)})^{-h+\be+s-1},\]
		so in fact $p^{(j)}=p^{(j-1)}$ for this largest possible value of $p^{(j)}$, proving that $p^{(j)}\le p^{(j-1)}$.
		
		With the inequality $p^{(j)}\le p^{(j-1)}$ above in mind, we immediately have for $i'\ne i$ by (e') holding for $j-1$ that $p_{i'}^{(j)}=p_{i'}^{(j-1)}\ge p^{(j-1)}\ge p^{(j)}$, and by the analysis in the paragraph above we also have $p_i^{(j)}\ge p^{(j-1)}\ge p^{(j)}$.  Similarly, the inequality $p^{(j)}\le (C^{-1} k)^{\frac{-1}{h-\be}}$ follows from this holding for $j-1$, so all that remains to check is $p^{(j)}\ge n^{-1}$.  Again by using \eqref{eq:lastStep} and the definition of $p^{(j)}$ we find
		\[(20)^{h^3} n^{h-s}\ge  \prod_{i':|V_{i'}^{(j)}|=1} p_{i'}^{(j)}\cdot k n^\be= C (p^{(j)})^{-h+\be+s} n^\be.\]
		Rearranging and using that $h-\be-s>0$ and $C\ge (20)^{h^3}$ exactly implies $p^{(j)}\ge n^{-1}$, finishing the proof of (e') and the claim.
	\end{proof}
	With this claim we see that the sets $V_i^{(j)}$ and reals $p_i^{(j)}$ will continue to be defined and satisfy the stated conditions until either the process terminates (meaning $p_i^{(j)}=p^{(j)}$ for all $i$ with $|V_i^{(j)}|\ge 2$) or until $j=4h^2$.  We claim that in fact the process will terminate at some $j\le 4h^2$.  Indeed, by  (f'), there can exist at most $h+1$ indices with $s^{(j)}=s$ for a given value $s$.  Since there are trivially at most $h+1$ possible values of $s^{(j)}$, we conclude that there are at most $(h+1)^2\le 4h^2$ possible indices $j$ for which (f') can be satisfied, and since (f') is always satisfied if the process both does not terminate and if $j\le 4h^2$, we conclude that the process must terminate for some $j\le 4h^2$, proving the result.
\end{proof}

We use this to prove a more quantitative version of \Cref{supersaturation for f large} for partite $\c{H}$.  For this we adopt the shorthand $|\mathbf{t}|=\sum t_i$ for a type $\mathbf{t}$.

\begin{prop}\label{supersaturationCase1}
	For all positive integers $h,f'$ and reals $\be<h$ and $C'\ge 1$, there exist real numbers $k_0,c>0$ such that the following holds.  Let $\c{P}$ be a hereditary family of hypergraph pairs with $\ex(n,\c{P})\le C' n^\be$ for all $n$, and let $(\c{H},\c{F})\in \c{P}_n$ be such that $\c{H}$ has an $h$-partition $V_1,\ldots,V_h$, $e(\c{H})\ge k n^\be$ with $k\ge k_0$, and every edge of $\c{F}$ has size at most $f'$.  If either (i) every edge of $\c{F}$ has size at least $h-\be$ or (ii) if there does not exist any $(\c{H}',\c{F}')\in \c{P}$ and $e\in E(\c{F}'),e'\in E(\c{H}')$ with $e\sub e'$; then there exists a type $\mathbf{t}$ such that
	\[e_{\mathbf{t}}(\c{F})\ge \begin{cases}
		c k^{\frac{|\mathbf{t}|}{h-\be}} & |\mathbf{t}|\ge h-\be,\\ 
		c k^{\frac{2}{h-\be-|\mathbf{t}|+2}} & |\mathbf{t}|\le h-\be.
	\end{cases}\] 
\end{prop}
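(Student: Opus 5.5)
We will argue by contradiction, applying \Cref{shrinkingApplied} to $(\c{H},\c{F})$ with a large constant $C$ (depending on $h,f',\be,C'$), and then passing to a random subset of the large parts. Suppose every type $\mathbf{t}$ violates the stated bound with a small constant $c$. Let $V_i'$, $p_i$, $s$ and $p$ be as given by \Cref{shrinkingApplied}. Write $S=\{i:|V_i'|=1\}$, so that $|S|=s<\lceil h-\be\rceil$. Set $P_S=\prod_{i\in S}p_i$. By (e) we have $p^{-(h-\be-s)}=C^{-1}P_Sk$. Using (a) and $p_i=p$ off $S$, this gives
\[e(\c{H}')\ge P_S\, p^{h-s}e(\c{H})\ge C p^{\be}\cdot p^{h-s}p^{-(h-s)}\cdot kn^\be/k=Cp^\be n^\be.\]
Meanwhile every large part has $|V_i'|\le 20^{4h^2}pn$. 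So $\c{H}'$ lives on at most $h\,20^{4h^2}pn+s=O(pn)$ vertices, using $pn\ge 1$. By heredity and $\ex(m,\c{P})\le C'm^\be$, we get $\al_{\c{H}'}(\c{F}')=O(C'(pn)^\be)$. Since $C$ is large, this already contradicts the lower bound on $e(\c{H}')$ provided $\c{F}'$ has so few edges that deleting them costs little. Making this precise is the whole content of the proof.

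The second step bounds $e(\c{F}')$ by (b). For a type $\mathbf{t}$, set $a=\sum_{i\notin S}t_i$ and $b=\sum_{i\in S}t_i$. Then
\[e_{\mathbf{t}}(\c{F}')\le O(1)\, p^{a}\prod_{i\in S}p_i^{t_i}\,e_{\mathbf{t}}(\c{F}).\]
Since $p_i\ge p$, we can bound $\prod_{i\in S}p_i^{t_i}\le \min(1,P_S)$ whenever some $t_i\ge1$ on $S$, or use the crude bound $\le 1$. Edges of type $\mathbf{t}$ with $t_i\ge 2$ for some $i\in S$ vanish, because $|V_i'|=1$ there. We also have $P_S\le 1$ and $p\le (C^{-1}k)^{-1/(h-\be)}$.

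In case (i), I would aim to show $\E$-type bounds of the form $e(\c{F}')<1$, or at least $e(\c{F}')\,(pn)^{h-1}\ll Cp^\be n^\be$, so that the deletion from \Cref{FMS corollary} leaves $\Omega(e(\c{H}'))$ edges. The cleanest route is a further random thinning. Keep each vertex of the large parts independently with some probability $r$ such that $rp\approx (C_1)^{-1}k^{-1/(h-\be)}$-like. Choose $r$ so that the expected number of surviving $\c{F}$-edges is below $1/2$, using $e_{\mathbf{t}}(\c{F})< c\,k^{|\mathbf{t}|/(h-\be)}$. Then $e(\c{H})$ should stay concentrated, via the Chebyshev argument as in \Cref{shrinkTechnical}, since after (c) all large parts have size of order $pn$ and no vertex is overly heavy. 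This would give an independent set inducing $\gg \ex$ edges.

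For case (ii) with $|\mathbf{t}|\le h-\be$, the exponent $2/(h-\be-|\mathbf{t}|+2)$ arises from balancing two regimes. In the first, a type-$\mathbf{t}$ edge lies mostly inside the singleton parts $S$. The hypothesis that no $\c{F}$-edge sits inside an $\c{H}$-edge forces at least two of its vertices in a single large part, or vertices outside the transversal structure; this is where the ``$+2$'' comes from. In the second, we optimize over $s$ and $P_S$ using (e). I would do the casework on $b$ and $a$, using $a\ge 2$ in the relevant subcase.

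The main obstacle is exactly this final accounting in case (ii). There one must show that for every admissible $s$ and every $P_S\in[p^{s},1]$, the surviving $\c{F}'$-edges of each type are too few to cancel the factor $C$ gain in $e(\c{H}')$. I expect to need to use the non-containment hypothesis inside $\c{H}'$ itself to rule out the dangerous types. These are types with $a\le1$, which would otherwise survive with only a factor $p^{a}$ loss.
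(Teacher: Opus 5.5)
Your opening reduction is correct and is the same as the paper's. You apply \Cref{shrinkingApplied}, use (a) and (e) to get $e(\c{H}')\ge Cp^\be n^\be$ on $O(pn)$ vertices, and note that $e(\c{F}')=0$ would contradict $\ex(m,\c{P})\le C'm^\be$ once $C$ is large. But you never establish $e(\c{F}')=0$, or anything weaker that suffices, and that is the whole proof.

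The extra random thinning you propose for case (i) is unsupported. \Cref{shrinkingApplied} gives only upper bounds on the $|V_i'|$ and nothing about degrees, so ``no vertex is overly heavy'' is not available; heavy vertices are exactly why \Cref{shrinkTechnical} needs its separate first case. Moreover, once more than one part is thinned, $e(\c{H}')$ becomes a multilinear polynomial in the indicators rather than a sum of independent terms, so that Chebyshev argument does not transfer. Case (ii) you leave open. Your inequality $\prod_{i\in S}p_i^{t_i}\le \min(1,P_S)$ also goes the wrong way: since $p_i\le 1$, a product over a subset of $S$ is at least $P_S$.

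The missing idea is that no further randomness is needed. Bound (b) together with the identity $P_S=Ck^{-1}p^{-(h-\be-s)}$ from (e) already forces $e_{\mathbf{t}}(\c{F}')<1$ for every type.
\begin{itemize}
\item Set $I_{\mathbf{t}}=\{i\in S:t_i=1\}$; types with some $t_i\ge 2$ on $S$ vanish. Then $\prod_ip_i^{t_i}=p^{|\mathbf{t}|-|I_{\mathbf{t}}|}\prod_{i\in I_{\mathbf{t}}}p_i$.
\item Pad the missing singleton factors using $p\le p_i$ on $S\sm I_{\mathbf{t}}$. This gives $\prod_ip_i^{t_i}\le p^{|\mathbf{t}|-s}P_S=Ck^{-1}p^{|\mathbf{t}|-h+\be}$.
\item If $|\mathbf{t}|\ge h-\be$, then $p\le (C^{-1}k)^{-1/(h-\be)}$ makes this at most $(C^{-1}k)^{-|\mathbf{t}|/(h-\be)}$. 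This beats $e_{\mathbf{t}}(\c{F})<ck^{|\mathbf{t}|/(h-\be)}$ once $c$ is small.
\item If $|\mathbf{t}|<h-\be$, the same padding inside (e) gives $p\le (C^{-1}k\prod_{i\in I_{\mathbf{t}}}p_i)^{-1/(h-\be-|I_{\mathbf{t}}|)}$. Using $\prod_{i\in I_{\mathbf{t}}} p_i\le 1$, this yields $\prod_ip_i^{t_i}\le (C^{-1}k)^{-(|\mathbf{t}|-|I_{\mathbf{t}}|)/(h-\be-|I_{\mathbf{t}}|)}$, which is increasing in $|I_{\mathbf{t}}|$.
\end{itemize}
So in case (ii) it suffices that every surviving type has $|I_{\mathbf{t}}|\le |\mathbf{t}|-2$, which produces the exponent $2/(h-\be-|\mathbf{t}|+2)$.

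Your instinct that types with at most one vertex in non-singleton parts must be killed by non-containment is right. However, the case of exactly one such vertex is not excluded by counting. The paper's one-line justification, that such a type must have some $t_i\ge 2$, also misses this case. To handle it, first delete the vertices of $\c{H}'$-degree zero. This keeps the pair in $\c{P}$, leaves $e(\c{H}')$ unchanged, and only shrinks $\c{F}'$ and the vertex count. Afterwards, any such $\c{F}'$-edge lies in an $\c{H}'$-edge through its non-singleton vertex, contradicting (ii).
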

\begin{proof}
	Let $k_0,c$ be real numbers to be specified later and assume for contradiction that there is an $(\c{H},\c{F})\in \c{P}_n$ satisfying either (i) or (ii) for which the proposition fails to hold.  Let $\c{H}',\c{F}'$ be the induced subhypergraphs of $\c{H},\c{F}$ guaranteed by \Cref{shrinkingApplied} with $C=\max\{C' h^\be (30)^{4h^2 \be},(30)^{4h^2}\}$, where here we implicitly assume $k\ge k_0\ge C$ so that this proposition applies.
	\begin{claim}
		It suffices to show $e(\c{F}')=0$.
	\end{claim}
	\begin{proof}
		By \Cref{shrinkingApplied}(c) and (e), we have $|V_i'|\le (20)^{4h^2} p n$ for all $i$, and hence \[m:=v(\c{H}')\le h (20)^{4h^2} pn.\]  
		
		Observe that $(\c{H}',\c{F}')\in \c{P}_m$ because $\c{P}$ is hereditary, and our assumption $e(\c{F}')=0$ implies $\al_{\c{H}'}(\c{F}')=e(\c{H}')$.  This together with our hypothesis on $\ex(m,\c{P})$ gives
		\[C' h^\be (20)^{4h^2\be} p^\be n^\be\ge \ex(m,\c{P})\ge e(\c{H}')\ge  \prod_i p_i k n^\be=p^{h-s}\cdot C p^{-h+\be+s} n^\be,\]
		where here the last inequality used \Cref{shrinkingApplied}(a) and the last equality used that $p_i=p$ for the $h-s$ indices with $|V_i'|\ge 2$ together with the definition of $p$.  Plugging in $C\ge C'h^\be (30)^{4h^2\be }$ gives a contradiction.
	\end{proof}
	This in turn implies the following.
	\begin{claim}\label{type bound claim}
		For each type $\mathbf{t}$ let \[I_{\mathbf{t}}=\{i:|V'_i|=1,\ t_i=1\}.\]
		It suffices to show for all types $\mathbf{t}$ with $e_{\mathbf{t}}(\c{F})>0$  and  $t_i\le 1$ for $i$ with $|V_i'|=1$ that
		\begin{equation}p^{|\mathbf{t}|-|I_{\mathbf{t}}|} \prod_{i\in I_{\mathbf{t}}} p_i<(40 h)^{-4h^2f'} c^{-1} e_{\mathbf{t}}(\c{F})^{-1}.\label{eq:target}\end{equation}
	\end{claim}
	\begin{proof}
		By the previous claim it suffices  to show $e(\c{F}')=0$ which is equivalent to showing $e_{\mathbf{t}}(\c{F}')=0$ for all types $\mathbf{t}$.  This is trivially true if $e_{\mathbf{t}}(\c{F})=0$ or if $t_i>1$ for some $i$ with $|V_i'|=1$, so we need only consider types as in the claim. Note that by \Cref{shrinkingApplied}(b), for any type $\mathbf{t}$ we have
		\[e_{\mathbf{t}}(\c{F}')\le (40 h)^{4h^2f'}\prod_i p_i^{t_i} e_{\mathbf{t}}(\c{F}),\]
		so to show $e_{\mathbf{t}}(\c{F}')<1$ it suffices to show
		\[\prod_i p_i^{t_i}< (40 h)^{-4h^2f'}c^{-1} e_{\mathbf{t}}(\c{F})^{-1}.\]
		Moreover, we have $\prod p_i^{t_i}=p^{|\mathbf{t}|-|I_{\mathbf{t}}|} \prod_{i\in I_{\mathbf{t}}} p_i$ since $p_i=p$ whenever $|V'_i|\ge 2$ and $t_i\le 1$ whenever $|V'_i|=1$ by assumption, which together with the inequality above gives \eqref{eq:target}.
	\end{proof}
	
	Let $\mathbf{t}$ be any type as in \Cref{type bound claim} and first consider the case that $|\mathbf{t}|\ge h-\be$.  By using the inequality $p\le p_i$ from \Cref{shrinkingApplied}(e) for each of the $s-|I_{\mathbf{t}}|$ indices $i$ with $|V'_i|=1$ and $i\notin I_{\mathbf{t}}$, we find that
	\[p^{|\mathbf{t}|-|I_{\mathbf{t}}|} \prod_{i\in I_{\mathbf{t}}} p_i\le p^{|\mathbf{t}|-s} \cdot \prod_{i:|V'_i|=1} p_i=p^{|\mathbf{t}|-s}\cdot C k^{-1} p^{-h+\be+s}=p^{|\mathbf{t}|-h+\be}  Ck^{-1}\le (C k^{-1})^{\frac{|\mathbf{t}|}{h-\be}},\]
	where this last inequality used $|\mathbf{t}|\ge h-\be$ and $p\le (C^{-1} k)^{-\frac{1}{h-\be}}$ from \Cref{shrinkingApplied}(e).  Recall now at the start of the proof that we implicitly assumed for contradiction (regardless of if we are in case (i) or (ii)) that every $\mathbf{t}$ with $|\mathbf{t}|\ge h-\be$ has $e_{\mathbf{t}}(\c{F})\le c k^{\frac{|\mathbf{t}|}{h-\be}}$, and combining this with the inequality above yields \eqref{eq:target} provided $c$ is sufficiently small in terms of $C^{\frac{|\mathbf{t}|}{h-\be}}\le C^{\frac{f'}{h-\be}}$, with this last inequality using that $|\mathbf{t}|\le f'$ for any type with $e_{\mathbf{t}}(\c{F})>0$.

	From now on we consider the case $|\mathbf{t}|< h-\be$.  Because we are assuming $e_{\mathbf{t}}(\c{F})>0$, this means $\c{F}$ contains an edge of size $|\mathbf{t}|<h-\be$ which means we are in case (ii). Observe from $p_i\ge p$ that
	\[p^{-h+\be+s}= C^{-1}k\prod_{i:|V_i'|=1} p_i \ge C^{-1} k\prod_{i\in I_{\mathbf{t}}} p_i\cdot p^{s-|I_{\mathbf{t}}|},\]
	and rearranging this together with the fact that $h-\be-|I_{\mathbf{t}}|\ge h-\be-s>0$ gives
	\[p \le \left(C^{-1} k\prod_{i\in I_{\mathbf{t}}} p_i\right)^{-\frac{1}{h-\be-|I_{\mathbf{t}}|}}.\]
	Applying this bound gives
	\begin{equation}p^{|\mathbf{t}|-|I_{\mathbf{t}}|} \prod_{i\in I_{\mathbf{t}}} p_i\le (C^{-1} k)^{\frac{|\mathbf{t}|-|I_{\mathbf{t}}|}{h-\be-|I_{\mathbf{t}}|}}\left(\prod_{i\in I_{\mathbf{t}}} p_i\right)^{\frac{h-\be-|\mathbf{t}|}{h-\be-|I_{\mathbf{t}}|}}\le (C^{-1} k)^{\frac{|\mathbf{t}|-|I_{\mathbf{t}}|}{h-\be-|I_{\mathbf{t}}|}},\label{eq:It}\end{equation}
	where this last step used $p_i\le 1$ for all $i$ and that the exponent of $\prod p_i$ is non-negative since we assumed $|\mathbf{t}|<h-\be$ in this case.  
	
	Because $|\mathbf{t}|<h-\be$, the upper bound of \eqref{eq:It} is as large as possible when $|I_{\mathbf{t}}|$ is as large as possible.  Crucially, because no edge of $\c{F}$ is contained in an edge of $\c{H}$ by hypothesis of (ii), any type $\mathbf{t}$ with $e_{\mathbf{t}}(\c{F})>0$ can not have $|I_{\mathbf{t}}|=|\mathbf{t}|$, for if it did then any edge $e\in E(\c{F})$ of this type and any edge $e'\in E(\c{H}')$ (which exists by \Cref{shrinkingApplied}(a) since $e(\c{H})>0$) would necessarily have $e\sub e'$ since $e'$ uses one vertex from each $V'_i$, and in particular it uses all $|I_{\mathbf{t}}|=|\mathbf{t}|$ vertices of $e$.  Also observe that if $|I_{\mathbf{t}}|<|\mathbf{t}|$ then $|I_{\mathbf{t}}|\le |\mathbf{t}|-2$ since such a $\mathbf{t}$ has some $t_i\ge 2$ and hence $|I_{\mathbf{t}}|\le |\mathbf{t}|-t_i\le |\mathbf{t}|-2$. Plugging in this estimate $|I_{\mathbf{t}}|\le |\mathbf{t}|-2$ into \eqref{eq:It} gives \eqref{eq:target} under our hypothesis $e_{\mathbf{t}}(\c{F})\le c k^{\frac{2}{h-\be-|\mathbf{t}|+2}}$ provided  $c$ is sufficiently small, completing the proof.
\end{proof}

We now formally put these pieces together to prove our main supersaturation result.
\begin{proof}[Proof of \Cref{supersaturation for f large}]
	Let $\c{P}$ be a hereditary family of $(h,f,f')$-bounded hypergraph pairs such that either $f\ge h-\be$ or no edge of any $\c{H}$ contains an edge of any $\c{F}$, and let $\be<h$  and $C'\ge 1$ be real numbers such that $\ex(n,\c{P})\le C' n^\be$.  Let $k_0(\be,C',\hat{h},f')$ denote the constant from \Cref{supersaturationCase1} applied with $\hat{h}$ in place of $h$, similarly define $c(\be,C',\hat{h},f)$, and let $k_0=2 h^{h+1} (1+\max_{\be < \hat{h}\le h} k_0(\be,C',\hat{h},f'))$ and $c=\min_{\be< \hat{h}\le h} c(\be,C',\hat{h},f)$.  We aim to prove the theorem with respect to this constant $k_0$. To aid with this proof, we define a new family $\c{P}'=\{(\c{H}',\c{F}):(\c{H},\c{F})\in \c{P},\ \c{H}'\sub \c{H}\}$.  It is not difficult to see that $\c{P}'$ is also a hereditary family of $(h,f,f')$-bounded hypergraph pairs and that $\ex(n,\c{P}')=\ex(n,\c{P})$ since $\al_{\c{H}'}(\c{F})\le \al_{\c{H}}(\c{F})$ for $\c{H}'\sub \c{H}$.
	
	Consider any $(\c{H},\c{F})\in \c{P}$ such that $e(\c{H})\ge k n^\be$ with $k\ge k_0$.  By our definition of $k_0$ we have $e(\c{H})\ge 2$, so by \Cref{partite} there exists some $\hat{\c{H}}\sub \c{H}$ on the same vertex set of $\c{H}$ which has a $\hat{h}$-partition $V_1,\ldots,V_{\hat{h}}$ and \[e(\hat{\c{H}})\ge \half h^{-h-1} k n^\be\ge \max_{\be<\hat{h}'\le h}k_0(\be,C',\hat{h}',f')n^\be.\] Observe that we must have $\hat{h}>\be$ in order for the $n$-vertex $\hat{h}$-uniform hypergraph $\hat{\c{H}}$ to have at least this many edges, so in particular $e(\hat{\c{H}})\ge k_0(\be,C',\hat{h},f')n^\be$.  Note that $(\hat{\c{H}},\c{F})\in \c{P}'$ by definition of this family, so it follows from \Cref{supersaturationCase1} applied to $\c{P}'$ (which we can apply since either $f\ge h-\be$ or no edge of any $\c{H}$ contains an edge of any $\c{F}$) that there is some type $\mathbf{t}$ such that $e_{\mathbf{t}}(\c{F})$ is as large as stated in \Cref{supersaturationCase1}.  In particular $e_{\mathbf{t}}(\c{F})>0$, which by assumption of $(\c{H},\c{F})$ being $(h,f,f')$-bounded means $f\le |\mathbf{t}|\le f'$.
	
	First consider the case $f\ge h-\be$.  This means $|\mathbf{t}|\ge h-\be$, and hence by \Cref{supersaturationCase1},
	\[e(\c{F})\ge e_{\mathbf{t}}(\c{F})=\Om(k^{\frac{|\mathbf{t}|}{h-\be}})=\Om(k^{\frac{f}{h-\be}}),\]
	proving the result. 	Now consider the case $f<h-\be$. This implies
	\[\frac{|\mathbf{t}|}{h-\be}\ge \frac{f}{h-\be}\ge \frac{f-(f-2)}{h-\be-(f-2)}=\frac{2}{h-\be-f+2}\ge \frac{2}{h-\be-|\mathbf{t}|+2}.\]
	As such, if $|\mathbf{t}|\ge h-\be$ then the desired bound holds from \Cref{supersaturationCase1}, so we may assume $|\mathbf{t}|<h-\be$.  This in particular means $|\mathbf{t}|\le \min\{f',\lceil h-\be\rceil -1\}$, and hence
	\[e(\c{F})\ge e_{\mathbf{t}}(\c{F})=\Om(k^{\frac{2}{h-\be-|\mathbf{t}|+2}})=\Om(k^{\frac{2}{h-\be-\min\{f',\lceil h-\be\rceil-1\}+2}}),\]
	proving the result.
\end{proof}

\section{Concluding Remarks}\label{sec:concluding}
In this paper we developed a way to view extremal problems as hypergraph-weighted independent sets, proved general supersaturation results for these extremal functions, and used this to give supersaturation results for generalized Tur\'an problems and related problems on linear equations.  We close by briefly discussing possible improvements of our results as well several new problems motivated by our hypergraph-weighted framework.

\textbf{Improved Bounds.}  The first bound of \Cref{supersaturation for f large} is a natural barrier for our methods and can be sharp in certain cases.  The second bound of \Cref{supersaturation for f large} is not natural, and in particular can be improved whenever a stronger condition on $(\c{H},\c{F})$ is imposed than just that no edge of $\c{H}$ contains an edge of $\c{F}$.  Specifically, if we further impose that for every $(\c{H},\c{F})\in \c{P}$ and every non-empty partite subgraph $\c{H}'\sub \c{H}$ that each edge of $\c{F}$ intersects at most $r<f$ of these parts in exactly 1 vertex, then we can obtain a supersaturation bound of the form $e(\c{F})=\Om(k^{\frac{f-r}{h-\be-r}})$ by our same proof since in \Cref{supersaturationCase1} we can by definition impose $|I_{\mathbf{t}}|\le r$.  Such a bound can be useful, for example, in generalized Tur\'an problems for certain choices of graphs $H,F$.  However, we wonder if this bound can simply always be improved to the more natural barrier in the first case of the theorem.

\begin{quest}
	Can one replace the second bound of \Cref{supersaturation for f large} by $\Om(k^{\frac{f}{h-\be}})$?
\end{quest} 

While we stated many general applications of our supersaturation results, one can often get sharper bounds by using more specific features of the problem at hand.  For example, we did not state the application of \Cref{FMS bound} to generalized planar Tur\'an numbers because the only choice of $d$ which works universally is $d=h-2$ but such a bound is usually ineffective since the maximum number of copies of $H$ in an $n$ vertex planar graph is often smaller than $n^{h-2}$.  Similar situations can happen in the setting of systems of linear equations, for example if $H$ encodes $h$-AP's with $h\ge 2$ then one can take $d=1$ instead of $d=h-1$ like \Cref{systems} implicitly assumes. For situations such as these it is best to look at \Cref{FMS bound} and see what the best value of $d$ is that one can take for one's problem.

\textbf{New Problems.}  There are many other problems which can be viewed from the perspective of determining $\al_{\c{H}}(\c{F})$ which may be of interest.  For example, we can ask the following variant of $\ex_\N(n,H,F)$ where we look for sets $A$ contained in $[n]$ rather than of size exactly $n$.

\begin{prob}\label{equation problem}
	Given sets $H,F\sub 2^{\N}$ of sets of integers and an integer $n$, determine the maximum number of subsets of $H$ that a set $A\sub [n]$ can contain if $A$ does not contain any subset of $F$.
\end{prob}
When $H$ is the set of all 1-element subsets and $F$ is the set of all non-trivial solutions to a system of linear equations then this is exactly the framework developed by Ruzsa~\cite{ruzsa1993solving}.  One can view \Cref{equation problem} through our weighted independent set language by defining $\c{H}_n,\c{F}_n$ to be the hypergraphs on $[n]$ which contains $H\cap 2^{[n]}$ and $F\cap 2^{[n]}$ as their respective edges, in which case \Cref{equation problem} precisely asks for $\al_{\c{H}_n}(\c{F}_n)$.  It is worth noting that this family of pairs $(\c{H}_n,\c{F}_n)$ is not hereditary, and as such our results do not say anything about its supersaturation properties.

One can consider variants of $\al_{\c{H}}(\c{F})$ to obtain other problems, such as Ramsey type results.  For example, we recently introduced the notion of the $H$-Ramsey number of a graph $F$, which is the minimum number of copies of $H$ that a graph $G$ can have if every 2-coloring of the edges of $G$ contains a copy of $F$.  This definition recovers the usual Ramsey number when $H=K_1$ and the size Ramsey number when $H=K_2$; see \cite{fox2026triangle,luo2026off} for more on this.  To phrase this in terms of hypergraphs, we let $\c{H}_G,\c{F}_G$ denote the hypergraphs on $E(G)$ which encode copies of $H,F$, and we observe that the $H$-Ramsey number of $F$ is exactly the minimum value of $e(\c{H}_G)$ amongst all graphs $G$ such that $E(G)=V(\c{F}_G)$ can not be written as the union of 2 independent sets of $\c{F}_G$.  This framing of Ramsey problems in terms of independent sets of pairs of hypergraphs suggests a number of new problems, such as the following size-Ramsey analog of Van der Waerden's Theorem which appears to be unexplored.
\begin{prob}
	Determine the minimum number of $\ell$-AP's that a set $A\sub \N$ can have if $A$ has the property that any $r$-coloring of its elements contains a monochromatic $k$-AP.
\end{prob}
Analogous to Folkman's Theorem for graphs, it is known that the answer to this problem is 0 for $k>\ell$ due to independent work of Spencer~\cite{spencer1975restricted} and Ne\v{s}et\v{r}il and R\"odl~\cite{nevsetvril1976van}.

\begingroup
\footnotesize  
\bibliographystyle{amsplain}
\bibliography{bib}{}
\endgroup

\end{document}